\newcounter{count}
\newtheoremstyle{bthm}{\baselineskip}{\baselineskip}{\slshape}{}{\bfseries}{}{ }{}
\newtheoremstyle{bex}{\baselineskip}{\baselineskip}{}{}{\sffamily}{:}{\newline }{}
\theoremstyle{bthm}
\newtheorem{theorem}{Theorem}[section]
\newtheorem{corollary}[theorem]{Corollary}
\newtheorem{lemma}[theorem]{Lemma}
\newtheorem{proposition}[theorem]{Proposition}
\newtheorem{fact}[theorem]{Fact}
\theoremstyle{bex}
\begin{document}
\begin{titlepage}
\title{$\chi$-binding function for a superclass of $2K_2$-free graphs.}
\author{Athmakoori Prashant$^{1,}$\footnote{The author's research was supported by the Council of Scientific and Industrial Research,  Government of India, File No: 09/559(0133)/2019-EMR-I.} and S. Francis Raj$^{2}$}
\date{{\footnotesize Department of Mathematics, Pondicherry University, Puducherry-605014, India.}\\
{\footnotesize$^{1}$: 11994prashant@gmail.com\, $^{2}$: francisraj\_s@pondiuni.ac.in\ }}
\maketitle
\renewcommand{\baselinestretch}{1.3}\normalsize
%%%%%%%%%%%%%%%%%%%%%%%%%%%%%%%%%%%%%%%%%%%%%%%%%%%%%%%%%%%%%%%%%%%%%%%%%%%%%%%%%%%%%%%%%%%%%%%%
%%%%%%%%%%%%%%%%%%%%%%%%%%%%%%%%%%%%%%%%%%%%%%%%%%%%%%%%%%%%%%%%%%%%%%%%%%%%%%%%%%%%%%%%%%%%%%%%
%%%%%%%%%%%%%%%%%%%%%%%%%%%%%%%%%%%%%%%%%%%%%%%%%%%%%%%%%%%%%%%%%%%%%%%%%%%%%%%%%%%%%%%%%%%%%%%%
\begin{abstract}
The class of $2K_2$-free graphs has been well studied in various contexts in the past.
In this paper, we study the chromatic number of $\{butterfly, hammer\}$-free graphs, a superclass of $2K_2$-free graphs and show that a connected $\{butterfly, hammer\}$-free graph $G$ with $\omega(G)\neq 2$ admits $\binom{\omega+1}{2}$ as a $\chi$-binding function which is also the best available $\chi$-binding function for its subclass of $2K_2$-free graphs.
In addition, we show that if $H\in\{C_4+K_p, P_4+K_p\}$, then any $\{butterfly, hammer, H\}$-free graph $G$ with no components of clique size two admits a linear $\chi$-binding function. Furthermore, we also establish that any connected $\{butterfly, hammer, H\}$-free graph $G$ where $H\in \{(K_1\cup K_2)+K_p, 2K_1+K_p\}$, is perfect for $\omega(G)\geq 2p$. 
\end{abstract}
\noindent
\textbf{Key Words:} Chromatic number, $\chi$-binding function and $2K_2$-free graphs. \\
\textbf{2000 AMS Subject Classification:} 05C15, 05C75
%%%%%%%%%%%%%%%%%%%%%%%%%%%%%%%%%%%%%%%%%%%%%%%%%%%%%%%%%%%%%%%%%%%%%%%%%%%%%%%%%%%%%%%%%%%%%%%
   %%%%%%%%%%%%%%%%%%%%%%%%%%%%%%%%%%%%%%%%%%%%%%%%%%%%%%%%%%%%%%%%%%%%%%%%%%%%%%%%%%%%%%%%%%%%%
%%%%%%%%%%%%%%%%%%%%%%%%%%%%%%%%%%%%%%%%%%%%%%%%%%%%%%%%%%%%%%%%%%%%%%%%%%%%%%%%%%%%%%%%%%%%%%%%
\section{Introduction}\label{intro}
All graphs considered in this paper are simple, finite and undirected.
Let $G$ be a graph with vertex set $V(G)$ and edge set $E(G)$.
For any positive integer $k$, a \emph{proper $k$-coloring} or simply \emph{$k$-coloring} of a graph $G$ is a mapping $c$ : $V(G)\rightarrow\{1,2,\ldots,k\}$ such that for any two adjacent vertices $u,v\in V(G)$, $c(u)\neq c(v)$.
If a graph $G$ admits a proper $k$-coloring, then $G$ is said to be \emph{$k$-colorable}.
The \emph{chromatic number}, $\chi(G)$, of a graph $G$ is the smallest $k$ such that $G$ is $k$-colorable.
In this paper, $K_n,P_n$ and $ C_n$ denote the complete graph, the path and the cycle on $n$ vertices respectively.
For $T,S\subseteq V(G)$, let $N_S(T)=N(T)\cap S$ (where $N(T)$ denotes the set of all neighbors of $T$ in $G$), let $\langle T\rangle$ denote the subgraph induced by $T$ in $G$ and let $[T,S]$ denote the set of all edges with one end in $T$ and the other end in $S$.
If every vertex in $T$ is adjacent with every vertex in $S$, then $ [T, S ]$ is said to be \emph{complete}.
For any graph $G$, we write  $H\sqsubseteq G$ if $H$ is an induced subgraph of $G$.
A set $S\subseteq V(G)$ is said to be an \emph{independent set} (\emph{clique}) if any two vertices of $S$ are non-adjacent (adjacent).
The \emph{clique number} of a graph $G$, is the size of a maximum clique in $G$ and is denoted by $\omega(G)$.
When there is no ambiguity, $\omega(G)$ will be denoted by $\omega$.
A subset $S$ of $V(G)$ is known as a dominating set if every vertex in $V\backslash S$ has a neighbor in $S$.

Let $\mathcal{F}$ be a family of graphs.
We say that $G$ is \emph{$\mathcal{F}$}-free if it contains no induced subgraph which is isomorphic to a graph in $\mathcal{F}$.
For a fixed graph $H$, let us denote the family of $H$-free graphs by $\mathcal{G}(H)$.
For two vertex-disjoint graphs $G_1$ and $G_2$, the \emph{join} of $G_1$ and $G_2$, denoted by $G_1+G_2$, is the graph whose vertex set $V(G_1+G_2) = V(G_1)\cup V(G_2)$ and the edge set $E(G_1+G_2) = E(G_1)\cup E(G_2)\cup\{xy: x\in V(G_1),\ y\in V(G_2)\}$.

A graph $G$ is said to be \emph{perfect} if $\chi(H)=\omega(H)$, for every induced subgraph $H$ of $G$.
A graph $G$ is said to be a \emph{multisplit graph}, if $V(G)$ can be partitioned into two subsets $V_1$ and $V_2$ such that $V_1$ induces a complete multipartite graph and $V_2$ is an independent set in $G$.
In particular, if $V_1$ induces a complete graph, then $G$ is said to be a \emph{split graph}.
A hereditary graph class $\mathcal{G}$ is said to be \emph{$\chi$-bounded} \cite{gyarfas1987problems} if there is a function $f$ (called a $\chi$-binding function) such that $\chi(G)\leq f(\omega(G))$, for every $G\in \mathcal{G}$.
We say that the $\chi$-binding function $f$ is \emph{special linear} if $f(x)= x+c$, where $c$ is a constant.
There has been extensive research done on $\chi$-binding functions for various graph classes.
See for instance, \cite{schiermeyer2019polynomial,randerath2004vertex,gyarfas1987problems,chung1990maximum,el1985existence,wagon1980bound,gaspers20192p_2,erdos1985problems}.
Let us recall a famous result by P.Erd\H{o}s
%%%%%%%%%%%%%%%%%%%%%%%%%%%%%%%%%%%%%%%%%%%%%%%%%%%%%%%%%%%%%%%%%%%%%%%%%%%%%%%%%%%%%%%%%%%%%%%%
\begin{theorem}[\cite{erdos1959graph}]\label{erdos} 
For any positive integers $k,l\geq 3$, there exists a graph $G$ with girth at least $l$ and $\chi(G)\geq k$ (girth of $G$ is the length of the shortest cycle in $G$).
\end{theorem}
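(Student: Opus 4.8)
The plan is to prove this via the probabilistic method, using the deletion technique, since no explicit construction is readily available that simultaneously forces large girth and large chromatic number. I would work in the Erd\H{o}s--R\'enyi random graph $G(n,p)$ on the vertex set $\{1,\dots,n\}$, where each of the $\binom{n}{2}$ potential edges is present independently with probability $p$. The entire argument hinges on choosing $p=n^{\theta-1}$ for a fixed exponent $\theta\in(0,1/l)$ and then letting $n\to\infty$; the two competing demands that pin down the admissible range of $\theta$ are exactly the heart of the matter.

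First I would control the number of short cycles. Let $X$ count the cycles of length at most $l-1$ in $G(n,p)$. By linearity of expectation, summing over cycle lengths $i$ from $3$ to $l-1$ and over the ways to embed an $i$-cycle,
\begin{equation*}
\mathbb{E}[X]=\sum_{i=3}^{l-1}\frac{n!}{(n-i)!\,2i}\,p^{i}\leq\sum_{i=3}^{l-1}\frac{(np)^{i}}{2i}=\sum_{i=3}^{l-1}\frac{n^{\theta i}}{2i}.
\end{equation*}
Since $\theta(l-1)<1$ by the choice $\theta<1/l$, every term is $o(n)$, so $\mathbb{E}[X]=o(n)$. Markov's inequality then yields $\mathbb{P}[X\geq n/2]\to 0$, so asymptotically almost surely the graph carries fewer than $n/2$ short cycles.

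Next I would bound the independence number. Setting $a=\lceil (3/p)\ln n\rceil$, a union bound over all $a$-subsets gives
\begin{equation*}
\mathbb{P}[\alpha(G)\geq a]\leq\binom{n}{a}(1-p)^{\binom{a}{2}}\leq\left(n\,e^{-p(a-1)/2}\right)^{a}\to 0,
\end{equation*}
using $1-p\leq e^{-p}$ and the choice of $a$. Hence a.a.s. $\alpha(G)<a=O(n^{1-\theta}\ln n)$. For $n$ large, both good events hold together with positive probability, so I may fix one graph $G$ with fewer than $n/2$ short cycles and with $\alpha(G)<a$. Deleting a single vertex from each short cycle destroys all cycles of length below $l$ while discarding at most $n/2$ vertices; the surviving graph $G'$ has girth at least $l$, at least $n/2$ vertices, and $\alpha(G')\leq\alpha(G)<a$. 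Applying $\chi\geq |V|/\alpha$,
\begin{equation*}
\chi(G')\geq\frac{|V(G')|}{\alpha(G')}\geq\frac{n/2}{(3/p)\ln n+1}=\Omega\!\left(\frac{n^{\theta}}{\ln n}\right),
\end{equation*}
which diverges with $n$, so choosing $n$ large enough forces $\chi(G')\geq k$, giving the required graph.

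The main obstacle is the parameter balancing inside the single exponent $\theta$: it must be small enough ($\theta<1/l$) so that the expected count of short cycles stays sublinear and deletion sacrifices only a vanishing fraction of the vertices, yet the very same $\theta$ must be strictly positive so that the chromatic lower bound $n^{\theta}/\ln n$ genuinely tends to infinity. Reconciling these opposing pressures simultaneously, rather than handling either estimate in isolation, is the delicate point, and it is precisely what makes the probabilistic deletion argument succeed where direct constructions resist.
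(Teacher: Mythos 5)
Your proof is correct: it is the standard probabilistic deletion argument (choose $p=n^{\theta-1}$ with $0<\theta<1/l$, show the expected number of cycles of length below $l$ is $o(n)$, bound $\alpha(G)$ by a union bound, delete a vertex from each short cycle, and use $\chi\geq |V|/\alpha$), and all the estimates check out. The paper does not prove this statement at all --- it is quoted as a known theorem with a citation to Erd\H{o}s's 1959 paper --- so there is no internal proof to compare against; your argument is essentially the canonical modern form of Erd\H{o}s's original probabilistic proof.
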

As a consequence of Theorem \ref{erdos}, Gy\'arf\'as in \cite{gyarfas1987problems} observed that there exists no $\chi$-binding function for $\mathcal{G}(H)$ whenever $H$ contains a cycle.
He further went on to conjecture that $\mathcal{G}(H)$ is $\chi$-bounded for every fixed forest $H$.

One of the earlier works in this direction, was done by S.Wagon in \cite{wagon1980bound} where he showed that the class of $2K_2$-free graphs admits $\binom{\omega+1}{2}$ as a $\chi$-binding function.
He further extended this result to show that the class of $pK_2$-free graphs admit a $\chi$-binding function of $O(\omega^{2p-2})$, when $p\in \mathbb{N}$.  
There has been extensive studies on the class of $2K_2$-free graphs ; see for instance \cite{chung1990maximum,el1985existence,wagon1980bound,gaspers20192p_2,erdos1985problems,blazsik1993graphs,karthick2018chromatic,brause2019chromatic,prashant2021chromatic,caldam2021chromaticspringerversion}.
In \cite{dhanalakshmi20162k2}, S. Dhanalakshmi et al. established the following structural characterization for $2K_2$-free graphs.
\begin{theorem}[\cite{dhanalakshmi20162k2}]\label{2k2characterization}
A connected graph is $2K_2$-free if and only if it is $\{P_5,butterfly,hammer\}$-free.
\end{theorem}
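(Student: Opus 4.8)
The statement is an equivalence, so the plan is to prove both implications, with essentially all of the work concentrated in the reverse direction. For the forward direction I would only observe that each forbidden graph contains an induced $2K_2$: in $P_5 = v_1v_2v_3v_4v_5$ the edges $v_1v_2$ and $v_4v_5$ induce a $2K_2$; in the butterfly (two triangles sharing a single vertex) the two ``outer'' edges do; and in the hammer (a triangle with a pendant path of length two attached at one of its vertices) the triangle edge opposite the handle together with the terminal edge of the handle do. Consequently any $2K_2$-free graph, whether connected or not, is automatically $\{P_5,\text{butterfly},\text{hammer}\}$-free, so connectivity plays no role here. This also clarifies why connectivity is essential for the converse: $2K_2$ itself is $\{P_5,\text{butterfly},\text{hammer}\}$-free (each forbidden graph has five vertices) yet is not $2K_2$-free.

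For the reverse direction I would argue by contrapositive: assuming $G$ is connected and contains an induced $2K_2$ on edges $ab$ and $cd$, with no edges between $\{a,b\}$ and $\{c,d\}$, I will exhibit an induced $P_5$, butterfly, or hammer. Since $G$ is connected and the two pairs are non-adjacent, I fix a shortest path $P$ joining $\{a,b\}$ to $\{c,d\}$; after relabelling within each edge I may assume $P$ runs from $b$ to $c$, and its length $\ell$ satisfies $\ell \ge 2$. The governing fact is that a shortest path is chordless, so if $\ell \ge 4$ then any five consecutive vertices of $P$ already induce a $P_5$ and we are finished. This reduces everything to the two short cases $\ell=2$ and $\ell=3$.

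If $\ell = 2$, write $P = b,m,c$; the outcome is dictated by the adjacency of $m$ to $a$ and to $d$. Adjacent to neither yields the induced $P_5$ on $a,b,m,c,d$; adjacent to exactly one yields a triangle with a pendant path of length two, i.e. a hammer; and adjacent to both makes $m$ the common apex of the triangles $abm$ and $cdm$, i.e. a butterfly. The case $\ell=3$, with $P = b,m_1,m_2,c$, is where the real care is required and is the step I expect to be the main obstacle. Minimality of $P$ forces the non-edges $bm_2$, $m_1c$, $am_2$, $m_1d$, which together with the four non-edges inherited from the $2K_2$ leave only the adjacencies $am_1$ and $m_2d$ undetermined. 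If $m_2 \not\sim d$ then $b,m_1,m_2,c,d$ induce a $P_5$; if $m_2\sim d$ but $a\not\sim m_1$ then $a,b,m_1,m_2,c$ induce a $P_5$; and in the remaining subcase, $m_2\sim d$ and $a\sim m_1$, the vertices $b,m_1,m_2,c,d$ induce the triangle $cdm_2$ with pendant path $m_2,m_1,b$, which is again a hammer. The delicate point is purely bookkeeping: one must check that these four subcases are exhaustive and that in each the claimed induced subgraph is genuinely chordless, so that no stray edge silently promotes a putative $P_5$ into something else. Once this verification is complete the contrapositive holds, and with the forward direction this establishes the theorem.
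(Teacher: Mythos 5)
The paper does not prove this statement at all: it is imported verbatim from the cited reference of Dhanalakshmi et al., so there is no in-paper argument to compare against. Your proof is correct and self-contained. The forward direction is right (each of $P_5$, the butterfly, and the hammer visibly contains an induced $2K_2$, and the remark that $2K_2$ itself witnesses the necessity of connectedness is apt). In the converse, the key facts you rely on all hold: a shortest path between the sets $\{a,b\}$ and $\{c,d\}$ is chordless, its internal vertices avoid $a$ and $d$ (else a shorter path exists), and minimality kills the adjacencies $am_2$ and $m_1d$ in the $\ell=3$ case; the resulting case analysis for $\ell=2$ and $\ell=3$ is exhaustive and each claimed induced subgraph checks out against the inherited non-edges. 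One small simplification: in the $\ell=3$ case, once $m_2\sim d$ the five vertices $b,m_1,m_2,c,d$ induce a hammer whether or not $a\sim m_1$, so the final split on $a\sim m_1$ is unnecessary; this does not affect correctness.
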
 

From Theorem \ref{2k2characterization}, we see that $\{P_5,hammer\}$-free graphs, $\{P_5,butterfly\}$-free graphs and $\{butterfly, hammer\}$-free graphs are superclasses of $2K_2$-free graphs.
In \cite{brause2019chromatic} Brause et al. showed that the class of $\{P_5,hammer\}$-free graphs admits the same $\chi$-binding function as the class of $2K_2$-free graphs, namely $\binom{\omega+1}{2}$.
I. Schiermeyer in \cite{schiermeyer2017chromatic} established a cubic $\chi$-binding function for $\{P_5,butterfly\}$-free graphs which was recently improved to $\frac{3}{2}(\omega^2-\omega)$ by Wei Dong et al. in \cite{dong2022chromatic}.
Finally, it remains to study the $\chi$-binding function for $\{butterfly, hammer\}$-free graphs.
As observed by Gy\'arf\'as in \cite{gyarfas1987problems}, the class of $\{butterfly, hammer\}$-free graphs does not admit any $\chi$-binding function.
Thus the natural question would be to characterize $\{butterfly, hammer\}$-free graphs which are $\chi$-bounded.

In this direction we begin this paper by showing that if $G$ is a connected $\{butterfly, hammer\}$-free graph with $\omega(G)\neq 2$, then $G$ is $\chi$-bounded and the $\chi$-binding function is same as the class of $2K_2$-free graphs.
As a result the connected $\{butterfly, hammer\}$-free graphs that are $\chi$-unbounded should be of clique size $2$.
In addition, we were able to show that if $G$ is a connected $\{butterfly, hammer\}$-free graph $G$ with $\omega(G)\neq 2$ then $V(G)$ can be partitioned into four sets where three of them are independent and one induce a $2K_2$-free graph.
As a consequence we see that for any graph $H$, if the class of $\{2K_2,H\}$-free graphs admits $f(x)$ as a $\chi$-binding function then the class of connected $\{butterfly, hammer, H\}$-free graph $G$ with $\omega(G)\neq 2$ will admit $f(x)+3$ as a $\chi$-binding function. 
In \cite{karthick2018chromatic} T. Karthick and S. Mishra posed a problem seeking tight$\backslash$linear chromatic bounds for $\{2K_2,H\}$-free graphs for various $H$, where $H$ is a graph on $t\geq 6$ vertices.
We partially answer this question by providing tight linear $\chi$-bounds for $\{butterfly,hammer,C_4+K_p\}$-free graphs and $\{butterfly, hammer, P_4+K_p\}$-free graphs $G$ which contains no components of clique size two.
Furthermore, when $\omega(G)\geq 2p$, we show that a connected $\{butterfly,hammer,(K_1\cup K_2)+K_p\}$-free graph $G$ is a multisplit graph and a connected $\{butterfly,hammer,2K_1+K_p\}$-free graph $G$ is a  split graph.
Finally, we also show that for $H\in \{(K_1\cup K_2)+K_p, 2K_1+K_p\}$, a connected $\{butterfly, hammer, H\}$-free graph $G$ with $\omega(G)\neq 2$ admits a linear $\chi$-binding function.

Some graphs that are considered as forbidden induced subgraphs in this paper are given in Figure \ref{somesplgraphs}.
\begin{figure}[t]
	\centering
		\includegraphics[width=0.8\textwidth]{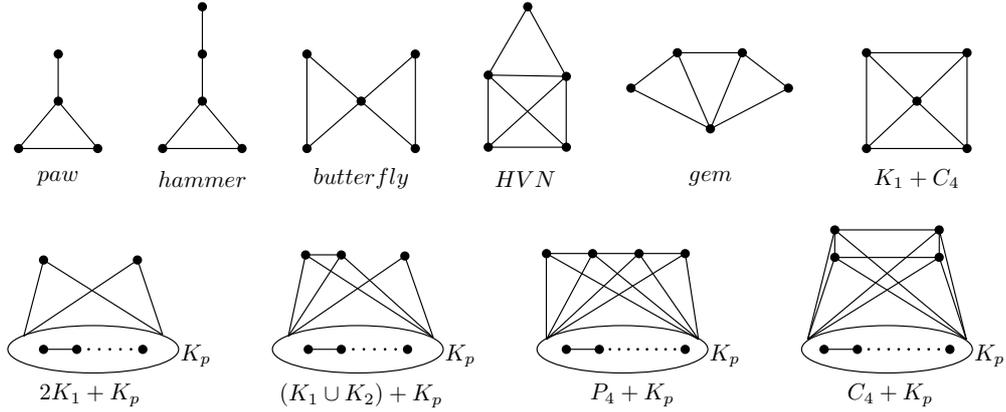}
	\caption{Some special graphs}
	\label{somesplgraphs}
\end{figure}
Notations and terminologies not mentioned here are as in \cite{west2005introduction}.

%%%%%%%%%%%%%%%%%%%%%%%%%%%%%%%%%%%%%%%%%%%%%%%%%%%%%%%%%%%%%%%%%%%%%%%%%%%%%%%%%%%%%%%%%%%%%%%%
%%%%%%%%%%%%%%%%%%%%%%%%%%%%%%%%%%%%%%%%%%%%%%%%%%%%%%%%%%%%%%%%%%%%%%%%%%%%%%%%%%
\section{Preliminaries}

Throughout this paper, we use a particular partition of the vertex set of a graph $G$ which was initially defined by S. Wagon in \cite{wagon1980bound} and later improved by A. P. Bharathi and S. A. Choudum  in \cite{bharathi2018colouring} as follows.
Let $A=\{v_1,v_2,\ldots,v_{\omega}\}$ be a maximum clique of $G$.
Let us define the \emph{lexicographic ordering} on the set $L=\{(i, j): 1 \leq i < j \leq \omega\}$ in the following way.
For two distinct elements $(i_1,j_1),(i_2,j_2)\in L$, we say that $(i_1,j_1)$ precedes $(i_2,j_2)$, denoted by $(i_1,j_1)<_L(i_2,j_2)$ if either $i_1<i_2$ or $i_1=i_2$ and $j_1<j_2$.
For every $(i,j)\in L$, let $C_{i,j}=\left\{v\in V(G)\backslash A:v\notin N(v_i)\cup N(v_j)\right\}\backslash\left\{\mathop\cup\limits_{(i',j')<_L(i,j)} C_{i',j'}\right\}$.

For $1\leq k\leq \omega$, let us define $I_k=\{v\in V(G)\backslash A: v\in N(v_i), \text{\ for\ every}\ i\in\{1,2,\ldots,\omega\}\backslash\{k\}\} $.
Since $A$ is a maximum clique, for $1\leq k\leq \omega$, $I_k$ is an independent set and for any $x\in I_k$, $xv_k\notin E(G)$.
Clearly, each vertex in $V(G)\backslash A$ is non-adjacent to at least one vertex in $A$. % or two or more vertices in $A$.
Hence those vertices will be contained either in $I_k$ for some $k\in\{1,2,\ldots,\omega\}$, or in $C_{i,j}$ for some $(i,j)\in L$.
Thus $V(G)=A\cup\left(\mathop\cup\limits_{k=1}^{\omega}I_k\right)\cup\left(\mathop\cup\limits_{(i,j)\in L}C_{i,j}\right)$.
Sometimes, we use the partition  $V(G)=V_1\cup V_2$, where $V_1=\mathop\cup\limits_{1\leq k\leq\omega}(\{v_k\}\cup I_k)=\mathop\cup\limits_{1\leq k\leq\omega}U_k$ and $V_2=\mathop\cup\limits_{(i,j)\in L}C_{i,j}$.

Without much difficulty, one can make the following observations.
\begin{fact}\label{true} For every $(i,j)\in L$, if a vertex $a\in C_{i,j}$, then $N_A(a)\supseteq \{v_1,v_2,\ldots,v_j\}\backslash\{v_i,v_j\}$.
\end{fact}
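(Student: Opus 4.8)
The plan is to prove the inclusion by contradiction, exploiting the lexicographic minimality built into the definition of the sets $C_{i,j}$. First I would unpack the membership $a\in C_{i,j}$: by definition it yields two things, namely (i) $a\notin N(v_i)\cup N(v_j)$, so $a$ is non-adjacent to both $v_i$ and $v_j$, and (ii) $a\notin C_{i',j'}$ for every $(i',j')<_L(i,j)$. Since the target set satisfies $\{v_1,\dots,v_j\}\setminus\{v_i,v_j\}=\{v_k:k<j,\ k\neq i\}$ (as $i<j$), it suffices to show that $a$ is adjacent to $v_k$ for every index $k$ with $k<j$ and $k\neq i$.

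The core step is the contradiction. Suppose some $k$ with $k<j$ and $k\neq i$ satisfies $av_k\notin E(G)$. Combined with the non-adjacency of $a$ to $v_i$ from (i), this produces an index pair that is a genuine ``non-neighbor pair'' lying strictly earlier in the lexicographic order: if $k<i$ take $(k,i)$, and if $i<k<j$ take $(i,k)$. In both cases the pair lies in $L$, has $a$ non-adjacent to both its endpoints, and precedes $(i,j)$ (in the first case because $k<i$, in the second because the first coordinate agrees and $k<j$).

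To close the contradiction I would not try to argue that $a$ lies in that specific earlier $C$-set, since $a$ may instead have been removed into an even earlier one. Instead I would pass to the lexicographically smallest pair $(s_0,t_0)$ with $a\notin N(v_{s_0})\cup N(v_{t_0})$; such a pair exists and satisfies $(s_0,t_0)<_L(i,j)$ by the previous step. Minimality then forces $a\in C_{s_0,t_0}$: condition (i) holds by the choice of $(s_0,t_0)$, and condition (ii) holds because each earlier $C_{s',t'}$ is contained in the set of vertices non-adjacent to both $v_{s'}$ and $v_{t'}$, so $a\in C_{s',t'}$ would exhibit an even earlier non-neighbor pair, contradicting the minimality of $(s_0,t_0)$. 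Thus $a$ belongs to a lexicographically earlier $C$-set, which contradicts (ii) for the pair $(i,j)$, and the proof is complete.

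The only real subtlety, and the step I would be most careful about, is exactly this last routing: because the definition of $C_{i,j}$ subtracts all lexicographically earlier $C$-sets, merely exhibiting an earlier non-neighbor index pair does not by itself place $a$ in a named earlier $C$-set. Passing through the lex-minimal non-neighbor pair resolves this cleanly and, as a byproduct, shows that every vertex outside $A$ is classified into the $C$-set indexed by its lexicographically smallest non-neighbor pair. Everything remaining is routine bookkeeping on the two cases $k<i$ and $i<k<j$.
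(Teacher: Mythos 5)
Your proof is correct. The paper states this fact without proof (``Without much difficulty, one can make the following observations''), and your argument --- producing a lexicographically earlier non-neighbour pair from any missing adjacency $av_k$ with $k<j$, $k\neq i$, and then routing through the lex-minimal such pair to place $a$ in an earlier $C_{s_0,t_0}$, contradicting $a\in C_{i,j}$ --- is exactly the intended reasoning, and your care about not placing $a$ directly into the first earlier $C$-set you find is the right way to handle the subtraction of earlier sets in the definition.
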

%%%%%%%%%%%%%%%%%%%%%%%%%%%%%%%%%%%%%%%%%%%%%%%%%%%%%%%%%%%%%%%%%%%%%%%%%%%%%%%%%%%%%%%%%%%%%%%%%%%%%%%%%%%%%%%%%%%%%%
\section{$\{butterfly,hammer\}$-free graphs}

In \cite{chung1990maximum}, Chung et al. proved the existence of a dominating maximum clique in a connected $2K_2$-free graph $G$, when $\omega(G)\geq 3$.
We begin Section 3 by showing that if $G$ is a connected graph in $\mathcal{G}(hammer)$, a superclass of $2K_2$-free graphs, with $\omega(G)\neq 2$, then it contains a dominating $\omega$-partite graph with clique size $\omega$.   
%%%%%%%%%%%%%%%%%%%%%%%%%%%%%%%%%%%%%%%%%%%%%%%%%%%%%%%%%%%%%%%%%%%%%%%%%%%%%%%%%%%%%%%%%%%%%%%%%%%%%%%%%%%%%%%%%%%%%%%
\begin{theorem}\label{hammerfree}
If $G$ is a connected $hammer$-free graph with $\omega(G)\neq 2$, then it contains a dominating $\omega$-partite graph with clique size $\omega$.
\end{theorem}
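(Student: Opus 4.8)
The plan is to prove that the set $V_1=\bigcup_{k=1}^{\omega}U_k$, where $U_k=\{v_k\}\cup I_k$, is a dominating set of $G$. Since each $U_k$ is independent and $A\subseteq V_1$ is a clique of size $\omega$, the induced subgraph $\langle V_1\rangle$ is then $\omega$-partite (with parts $U_1,\dots,U_\omega$) and has clique size exactly $\omega$ (as $\omega(\langle V_1\rangle)\leq\omega(G)=\omega$), so it is the required dominating $\omega$-partite graph. Throughout I would assume $\omega\geq 3$ (the case $\omega=1$ is immediate, as then $G=K_1$), and this is the only place the hypothesis $\omega\neq 2$ is used, because the arguments below repeatedly need a triangle inside $A$.

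First I would dispose of the easy vertices using Fact~\ref{true}. Every $a\in C_{i,j}$ with $j\geq 3$ satisfies $N_A(a)\supseteq\{v_1,\dots,v_j\}\setminus\{v_i,v_j\}\neq\varnothing$, and every $a\in C_{1,2}$ with $N_A(a)\neq\varnothing$ is adjacent to some $v_m\in A$; all such vertices are dominated by $A\subseteq V_1$. Hence the only vertices that could fail to be dominated lie in $S:=\{v\in V(G):N_A(v)=\varnothing\}$, and since such a vertex misses $v_1$ and $v_2$ it is placed in $C_{1,2}$. It therefore remains to show that each $a\in S$ has a neighbour in some $I_k$.

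The core step treats a vertex $a\in S$ through a shortest path $a=u_0,u_1,\dots,u_r$ from $a$ to $A$ with $u_r\in A$ and $r\geq 2$. Let $u_\ell$ be the first vertex on this path having a neighbour $v_m\in A$; then $u_0,\dots,u_{\ell-1}\in S$ and $1\leq\ell\leq r-1$. I would argue by cases on the location of $u_\ell$. If $u_\ell\in C_{i',j'}$, then $u_\ell$ misses $v_{i'}$ and $v_{j'}$, so $\{v_m,v_{i'},v_{j'}\}$ is a triangle in $A$; together with the handle $v_m-u_\ell-u_{\ell-1}$ (where $u_{\ell-1}\in S$ is adjacent to $u_\ell$ but to no vertex of $A$) this yields an induced hammer, a contradiction. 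If $u_\ell\in I_k$ and $\ell\geq 2$, I would pick two clique vertices $v_s,v_t\neq v_k$ (available since $\omega\geq 3$), so that $\{u_\ell,v_s,v_t\}$ is a triangle; then the handle $u_\ell-u_{\ell-1}-u_{\ell-2}$, whose vertices lie in $S$ and so meet none of $v_s,v_t$ while $u_{\ell-2}\not\sim u_\ell$ by the geodesic property, again produces an induced hammer. Both contradictions force $\ell=1$ with $u_\ell\in I_k$, i.e.\ $a$ has the neighbour $u_1\in I_k\subseteq V_1$, completing the domination argument.

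The main obstacle is precisely this last configuration, in which the first vertex reaching $A$ lands in some $I_k$ rather than in a $C_{i',j'}$: a single clique non-neighbour of $u_\ell$ is then not enough to form the triangle a hammer needs, and one must instead exploit the second predecessor $u_{\ell-2}$ on the geodesic together with two clique vertices avoiding $v_k$. Making this bookkeeping precise---checking that $u_{\ell-1},u_{\ell-2}\in S$, that $u_{\ell-2}\not\sim u_\ell$, and that all five vertices are distinct so that the forbidden subgraph is genuinely induced---is the delicate part, and it is exactly here that $\omega\geq 3$ is indispensable.
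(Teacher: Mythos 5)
Your proof is correct and follows essentially the same strategy as the paper's: both take a shortest path from an undominated vertex toward the dominating set, locate the first vertex that "touches" it, and build an induced hammer from a triangle (either inside $A$ or on an $I_k$-vertex plus two clique vertices) with a pendant path of vertices having no neighbours in $A$. The only cosmetic difference is that you measure distance to $A$ and split cases by whether the transition vertex lies in a $C_{i',j'}$ or an $I_k$, while the paper measures distance to $V_1$ and splits by whether the transition vertex's $V_1$-neighbour lies in $A$ or in some $I_s$; the resulting hammer configurations are identical.
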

\begin{proof}
Let $G$ be a connected $hammer$-free graph and $V(G)=V_1\cup V_2$, where $V_1=\mathop\cup\limits_{1\leq k\leq\omega}(\{v_k\}\cup I_k)=\mathop\cup\limits_{1\leq k\leq\omega}U_k$ and $V_2=\mathop\cup\limits_{(i,j)\in L}C_{i,j}$.
Clearly, $\langle V_1\rangle$ is an $\omega$-partite graph such that $\omega(\langle V_1\rangle)=\omega(G)$.
We shall show that $V_1$ is a dominating set of $G$.
To show this it is enough to show that every vertex in $V_2$ has a neighbor in $V_1$.
As $G$  is a connected graph, for $\omega(G)=1$, there is nothing to prove.
So let us assume that $\omega(G)\geq 3$.
Suppose there exists $x\in V_2$ such that $[x,V_1]=\emptyset$, the fact that $G$ is connected guarantees that there exists $y\in V_2$ such that $[y,V_1]\neq \emptyset$ and $x$ and $y$ are connected by a path.
Let $y'\in V_2$ such that $[y',V_1]\neq \emptyset$ and $[y,V_1]=0$ for every $y\in V_2$ such that $d(x,y)\leq d(x,y')$.
Let $P_{xy'}$ denote the shortest path between $x$ and $y'$ and $z$ be the preceeding vertex of $y'$ in $P_{xy}$.
Let us assume that $y'\in C_{m,n}$.
If $N_A(y')\neq \emptyset$, say $v_s\in N_A(y')$, then $\langle\{z,y',v_s,v_m,v_n\}\rangle\cong hammer$, a contradiction.
We know that $[y', V_1]\neq \emptyset$.
Therefore  there exists a vertex $a\in I_s$, where $s\in \{1,2,\ldots,\omega\}$ such that $ay'\in E(G)$.
Also since $\omega(G)\geq 3$, there exists two integers $r,q\in \{1,2,\ldots,\omega\}\backslash\{s\}$ such that $\langle\{z,y',a,v_r,v_q\}\rangle\cong hammer$, a contradiction.
Hence every vertex in $V_2$ has a neighbor in $V_1$.
\end{proof}
%%%%%%%%%%%%%%%%%%%%%%%%%%%%%%%%%%%%%%%%%%%%%%%%%%%%%%%%%%%%%%%%%%%%%%%%%%%%%%%%%%%%%%%%%%%%%%%%%%%%%%%%%%%%%%%%%%%%%%%

One can notice that $\omega(hammer)=\omega(butterfly)=3$.
Hence considering all connected graphs $G$ which are $\{butterfly, hammer\}$-free graph with clique size $2$, is equivalent to considering the the class of all $K_3$-free graphs which is clearly $\chi$-unbounded. The Mycielskian construction is a classical way to construct triangle free graphs with chromatic number $k$, where $k\in N$.
Hence, while considering the $\chi$-binding functions of $\{butterfly,hammer\}$- free graphs we shall assume that $\omega\neq 2$.
Let us now find a structural characterization and a $\chi$-binding function for $\{butterfly, hammer\}$-free graphs.

\begin{theorem}\label{hbf}
If $G$ is a connected $\{butterfly,hammer\}$-free graph with $\omega(G)\neq 2$, then
\begin{enumerate}[(i)] 
\item There exists a partition of $V(G)$ such that $V(G)= X_1\cup X_2\cup X_3\cup X_4\cup X_5$ where $\langle X_1\rangle$ is a dominating $\omega$-partite graph with $\omega(\langle X_1\rangle)=\omega(G)$, $\langle X_2\rangle$, $\langle X_3\rangle$ and $\langle X_4\rangle$ are independent sets and $(\langle X_1\cup X_5\rangle)$ is $2K_2$-free
\item $\chi(G)\leq \binom{\omega(G)+1}{2}$.
\end{enumerate}
\end{theorem}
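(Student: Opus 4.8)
The plan is to carry out everything on the vertex partition $V(G)=V_1\cup V_2$ recalled in Section~2, with $V_1=\bigcup_{k}U_k$ and $V_2=\bigcup_{(i,j)\in L}C_{i,j}$, and to take $X_1=V_1$. By Theorem~\ref{hammerfree}, $\langle X_1\rangle$ is already a dominating $\omega$-partite graph with $\omega(\langle X_1\rangle)=\omega(G)$, the classes $U_k$ witnessing the $\omega$-partiteness. Thus both parts of the statement reduce to controlling $\langle V_2\rangle$ and its attachment to $V_1$.

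The engine is a dichotomy forced by the two excluded graphs. Suppose $ab\in E(G)$ and $v_iv_j$ is an edge of the maximum clique $A$ with $a,b\notin N(v_i)\cup N(v_j)$. If $a$ has a neighbour $v_s\in A$ (necessarily $s\notin\{i,j\}$), then $\langle\{v_i,v_j,v_s,a,b\}\rangle$ is a $butterfly$ when $v_s\in N(b)$ and a $hammer$ when $v_s\notin N(b)$; if instead $a$ has no neighbour in $A$, then a neighbour $p\in I_s$ of $a$ (which exists by Theorem~\ref{hammerfree}) plays the same role on $\{a,b,p,v_k,v_l\}$ for two indices $k,l\notin\{s\}$. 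Either way we reach a contradiction, so no induced $2K_2$ of $G$ uses an edge of $A$. In particular every cell $C_{i,j}$ is independent, since an edge inside $C_{i,j}$ would form exactly such a $2K_2$ together with $v_iv_j$. Running the same dichotomy inside $V_1$ — after first noting that a clique endpoint $v_m$ of an edge forces the opposite edge into the independent set $I_m$, so that all four endpoints may be assumed to lie in $\bigcup_k I_k$ — shows that $\langle V_1\rangle$ is itself $2K_2$-free.

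Granting that every cell is independent, part~(ii) is immediate. I would colour each class $U_k$ of $V_1$ with one of $\omega$ colours and each of the $\binom{\omega}{2}$ cells $C_{i,j}$ with its own fresh colour. Distinct cells receive distinct colours, each cell and each $U_k$ is independent, and the palettes of $V_1$ and $V_2$ are disjoint, so this is a proper colouring using $\omega+\binom{\omega}{2}=\binom{\omega+1}{2}$ colours; note that this route does not even invoke Wagon's theorem.

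For part~(i) it remains to split $V_2$ into three independent sets $X_2,X_3,X_4$ and a remainder $X_5$ with $\langle X_1\cup X_5\rangle$ $2K_2$-free. Note first that $X_5$ cannot contain a vertex $w$ that completes an induced $2K_2$ with an edge of $\langle V_1\rangle$; the prototype is $C_{1,2}$, whose vertices miss both $v_1$ and $v_2$ and therefore pair with suitable edges of $\langle V_1\rangle$, so $C_{1,2}$ (which is independent) must be shed into one of $X_2,X_3,X_4$. The approach is to let $B\subseteq V_2$ be the set of all such unkeepable vertices, set $X_5=V_2\setminus B$, and recover $X_2,X_3,X_4$ as the colour classes of a proper $3$-colouring of $\langle B\rangle$. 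The main obstacle is exactly this last point: identifying $B$ precisely, proving that the vertices outside $B$ can be kept simultaneously (i.e.\ that $\langle V_1\cup X_5\rangle$ has no $2K_2$ using two vertices of $X_5$), and bounding $\chi(\langle B\rangle)\le 3$. I expect this to hinge on a careful case analysis, organised by the dichotomy above and by the fact that no $2K_2$ uses a clique edge, of the edges between different cells and between cells and the sets $I_k$; the role of the $hammer$ and $butterfly$ constraints is to keep the number of surviving configurations small enough to be covered by three independent sets.
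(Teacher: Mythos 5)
Your proof of part (ii) is complete and is exactly the paper's colouring: each $U_k$ is independent and gets colour $k$, each $C_{i,j}$ is independent (your dichotomy argument for this is essentially the paper's Claim~1, including the separate treatment of $C_{1,2}$ via a neighbour in some $I_s$) and gets a fresh colour, for a total of $\omega+\binom{\omega}{2}=\binom{\omega+1}{2}$. Note, as you observe, that (ii) does not need (i).

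Part (i), however, is not proved: you explicitly defer ``identifying $B$ precisely, proving that the vertices outside $B$ can be kept simultaneously, and bounding $\chi(\langle B\rangle)\le 3$,'' and that deferred step is the entire content of (i) beyond what Theorem~\ref{hammerfree} and Claim~1 already give. The paper's resolution is concrete and simpler than your plan suggests: take $X_2=C_{1,2}$, $X_3=C_{1,3}$, $X_4=C_{2,3}$ (each already independent, so no $3$-colouring of an auxiliary graph $\langle B\rangle$ is needed) and $X_5=\bigcup_{j\geq 4}\bigcup_{i<j}C_{i,j}$. The key observation you are missing is that, by Fact~\ref{true}, every vertex of a cell $C_{i,j}$ with $j\geq 4$ is adjacent to at least two of $v_1,v_2,v_3$; this is what makes these cells ``keepable.'' The proof that $\langle V_1\cup X_5\rangle$ is $2K_2$-free then proceeds by a case analysis on how many of the four vertices of a putative induced $2K_2$ lie in $X_5$ (at most one; exactly two; at least three), in each case producing a vertex of $\{v_1,v_2,v_3\}$ (or, for $\omega\geq 4$, some $v_q$ with $U_q$ disjoint from three of the four vertices) adjacent to enough of them to force a $butterfly$ or $hammer$. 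None of this case analysis appears in your proposal. A secondary, smaller issue: your claim that $\langle V_1\rangle$ is $2K_2$-free by ``running the same dichotomy'' also needs more than stated when $\omega=3$ (the paper uses a pigeonhole argument placing two non-adjacent vertices of the $2K_2$ in a common $I_k$ and then splits on where the other two lie); the dichotomy you set up is tailored to a $2K_2$ containing an edge of $A$, which is precisely the configuration that cannot occur inside $V_1$.
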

\begin{proof}
Let $G$ be a connected $\{butterfly,hammer\}$-free graph with $\omega(G)\neq 2$ and let $V(G)=V_1\cup V_2$, where $V_1=\mathop\cup\limits_{1\leq k\leq\omega}(\{v_k\}\cup I_k)=\mathop\cup\limits_{1\leq k\leq\omega}U_k$ and $V_2=\mathop\cup\limits_{(i,j)\in L}C_{i,j}$.
For $\omega(G)=1$, there is nothing to prove.
So let us assume that $\omega(G)\geq 3$.
As in Theorem \ref{hammerfree}, one can observe that $V_1$ dominates $G$.
We shall now show that $C_{i,j}$'s are independent.

\textbf{Claim 1}: $C_{i,j}$ is an independent set for every $(i,j)\in L$.

By Fact \ref{true} for each $C_{i,j}$, where $(i,j)\neq (1,2)$, there exists an integer $s\in \{1,2,\ldots,j\}\backslash\{i,j\}$ such that $[v_s,C_{i,j}]$ is complete.
Thus if there exists an edge $ab$ in $\langle C_{i,j}\rangle$, $(i,j)\neq (1,2)$, then $\langle\{a,b,v_s,v_i,v_j\}\rangle\cong butterfly$, a contradiction
Next, let us show that $C_{1,2}$ is also an independent set.
On the contrary, let $ab$ be an edge in $\langle C_{1,2}\rangle$.
Since $V_1$ dominates $G$, $[a,V_1]\neq \emptyset$ and $[b,V_1]\neq \emptyset$.
Now if $[\{a,b\}, A]\neq \emptyset$, say $v_s$ is adjacent to either $a$ or $b$ then $\langle\{a,b,v_s,v_1,v_2\}\rangle$ will induce a $butterfly$ or a $hammer$.
Hence $[\{a,b\}, I_s]\neq \emptyset$ for some $s\in\{1,2,\ldots,\omega\}$ say $z\in I_s$ such that $[\{a,b\}, I_s]\neq \emptyset$.
Since $\omega(G)\geq 3$, we can find $r,q\in \{1,2,\ldots,\omega\}\backslash \{s\}$ such that $\langle\{a,b,z,v_r,v_q\}\rangle\cong butterfly$ or $hammer$, a contradiction.
Hence $C_{1,2}$ is also independent.
 
Next, we shall show that $\langle V_1\cup (\mathop\cup\limits_{j\geq 4} \mathop\cup\limits_{i\leq j-1} C_{i,j})\rangle$ is $2K_2$-free.

On the contrary, let us assume that there exists an induced  $2K_2$ in $\langle V_1\cup (\mathop\cup\limits_{j\geq 4} \mathop\cup\limits_{i\leq j-1} C_{i,j})\rangle$, say $a,b,c,d$ with edges $ab,cd\in E(G)$.
We begin by considering $\omega(G)=3$.
Here, $V_1\cup (\mathop\cup\limits_{j\geq 4} \mathop\cup\limits_{i\leq j-1} C_{i,j})=V_1$.
One can observe that $\{a,b,c,d\}\cap A=\emptyset$.
Hence $a,b,c,d\in I_1\cup I_2\cup I_3$.
Clearly, there exists an integer $k\in \{1,2,3\}$ such that it contains exactly two non-adjacent vertices of $\{a,b,c,d\}$.
Depending on whether the remaining two vertices of $\{a,b,c,d\}$ belong to the same $I_i$ or not, we see that there exists an $r\in\{1,2,3\}$ such that $\langle\{v_r,a,b,c,d\}\rangle\cong butterfly$ or $hammer$, a contradiction.
Hence, $\langle V_1\rangle$ is $2K_2$-free and $V_1\cup C_{1,2}\cup C_{1,3}\cup C_{2,3}$ is the required partition.

Next, let us consider $\omega(G)\geq 4$. Let us break our proof into three cases.

\textbf{Case 1}: $|\{a,b,c,d\} \cap (\mathop\cup\limits_{j\geq 4} \mathop\cup\limits_{i\leq j-1} C_{i,j})|\leq 1$.

Without loss of generality, let us assume that $b,c,d\in V_1$.
Since $\omega(G)\geq 4$, there exists $q\in \{1,2,\ldots,\omega\}$ such that $U_q\cap \{b,c,d\}=\emptyset$.
Hence $\langle\{a,b,c,d,v_{q}\}\rangle\cong butterfly$ or $hammer$, a contradiction.

\textbf{Case 2}: $|\{a,b,c,d\} \cap (\mathop\cup\limits_{j\geq 4} \mathop\cup\limits_{i\leq j-1} C_{i,j})|=2$.

Let $x,y\in\{a,b,c,d\}$ such that $x,y\in V_1$.
Let $r_1,r_2\in\{1,2,\ldots,\omega\}$ such that $x,y\in U_{r_1}\cup U_{r_2}$.
Clearly, there exists an integer $q_1\in \{1,2,3\}\backslash\{r_1,r_2\}$ such that $[v_{q_1},\{x,y\}]$ is complete.
If $|[v_{q_1},\{a,b,c,d\}\backslash\{x,y\}]|\neq 0$, then $\langle\{v_{q_1},a,b,c,d\}\rangle\cong butterfly$ or $hammer$ , a contradiction.
Hence $|[v_{q_1},\{a,b,c,d\}\backslash\{x,y\}]|=0$, which implies $\{\{a,b,c,d\}\backslash\{x,y\}\}\subseteq \mathop\cup\limits_{j\geq 4}C_{{q_1},j}$.
If there exists an integer $q_2\in \{1,2,3\}\backslash\{r_1,r_2,q_1\}$, then $\langle\{v_{q_2},a,b,c,d\}\rangle\cong butterfly$, a contradiction.
Hence $\{r_1,r_2,q_1\}=\{1,2,3\}$.
Without loss of generality, let us assume that $\{x,y\}\cap U_{r_1}\neq \emptyset$, then $\langle\{v_{r_2},a,b,c,d\}\rangle\cong butterfly$ or $hammer$, a contradiction.

\textbf{Case 3}: $|\{a,b,c,d\}\cap (\mathop\cup\limits_{j\geq 4} \mathop\cup\limits_{i\leq j-1} C_{i,j})|\geq 3$.

Without loss of generality, let us assume that $a,b,c\in (\mathop\cup\limits_{j\geq 4} \mathop\cup\limits_{i\leq j-1} C_{i,j})$.
If $x\in \{v_1,v_2,v_3\}\cap N_A(a)\cap N_A(b)\cap N_A(c)$, then $\langle\{x,a,b,c,d\}\rangle\cong butterfly$ or $hammer$, a contradiction.
Hence $\{v_1,v_2,v_3\}\cap N_A(a)\cap N_A(b)\cap N_A(c)=\emptyset$.
Thus $a,b,c,\in (\mathop\cup\limits_{j\geq 4}C_{1,j})\cup(\mathop\cup\limits_{j\geq 4}C_{2,j})\cup(\mathop\cup\limits_{j\geq 4}C_{3,j})$ such that $\{a,b,c\}\cap (\mathop\cup\limits_{j\geq 4}C_{i,j})\neq \emptyset$, for every $i\in\{1,2,3\}$.
Clearly, $|[x,\{a,b,c\}]|=2$, for every $x\in\{v_1,v_2,v_3\}$.
Now we know that $d\in V_1\cup(\mathop\cup\limits_{j\geq 4} \mathop\cup\limits_{i\leq j-1} C_{i,j})$, which implies that there exists a vertex $y\in \{v_1,v_2,v_3\}$ such that $dy\in E(G)$.
Therefore, $\langle\{y,a,b,c,d\}\rangle\cong hammer$, a contradiction.

Thus $V(G)=V_1\cup C_{1,2}\cup C_{1,3}\cup C_{2,3}\cup (\mathop\cup\limits_{j\geq 4} \mathop\cup\limits_{i\leq j-1} C_{i,j})$ is the required partition where $X_1=V_1$, $X_2=C_{1,2}$, $X_3=C_{1,3}$, $X_4=C_{2,3}$, $X_5=(\mathop\cup\limits_{j\geq 4} \mathop\cup\limits_{i\leq j-1} C_{i,j})$ and $\langle X_1\cup X_5\rangle$ is $2K_2$-free.

Finally let us establish an $\binom{\omega+1}{2}$ coloring of $G$.
Let $\{1,2,\ldots,\binom{\omega+1}{2}\}$ be the set of colors.
We shall start by assigning the color $k$ to the vertices in $v_k\cup I_k$, where $k\in \{1,2,\ldots,\omega\}$.
Next, for $(i,j)\in L$, color each $C_{i,j}$ with a new color and hence we can color the vertices in $G$ with at most $\omega+\sum\limits_{j=2}^{\omega(G)} (j-1) = \binom{\omega(G)+1}{2}$ colors.
Clearly, this is a proper coloring of $G$.
Hence $\chi(G)\leq \binom{\omega+1}{2}$.
\end{proof}

One can observe that if $G$ is a $2K_2$-free graph then it contains at most one edge containing component and hence any $k$-coloring for the non-trivial component of $G$ will yield a $k$-coloring for $G$.
Now as a consequence of Theorem \ref{hbf} we get Corollary \ref{chibindrelation} and Corollary \ref{2k2}, the latter a result due to S.Wagon in \cite{wagon1980bound}.

\begin{corollary}\label{chibindrelation}
If the class of $\{2K_2,H\}$-free graphs admits the $\chi$-binding function $f(x)$, then a connected $\{butterfly, hammer, H\}$-free graph $G$ such that $\omega(G)\neq 2$ would admit $f(x)+3$ as a $\chi$-binding function. 
\end{corollary}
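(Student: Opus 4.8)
The plan is to reduce the coloring of $G$ to coloring a single $\{2K_2,H\}$-free induced subgraph together with a constant number of independent sets, using the structural decomposition already supplied by Theorem \ref{hbf}. Since $G$ is connected and $\{butterfly,hammer,H\}$-free with $\omega(G)\neq 2$, it is in particular a connected $\{butterfly,hammer\}$-free graph with $\omega(G)\neq 2$, so Theorem \ref{hbf} hands us a partition $V(G)=X_1\cup X_2\cup X_3\cup X_4\cup X_5$ in which $\langle X_2\rangle$, $\langle X_3\rangle$, $\langle X_4\rangle$ are independent sets and $\langle X_1\cup X_5\rangle$ is $2K_2$-free, with $\langle X_1\rangle$ a dominating $\omega$-partite subgraph satisfying $\omega(\langle X_1\rangle)=\omega(G)$.

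First I would observe that $\langle X_1\cup X_5\rangle$ lies in the class of $\{2K_2,H\}$-free graphs: it is $2K_2$-free by Theorem \ref{hbf}, and it is $H$-free because it is an induced subgraph of the $H$-free graph $G$. Moreover its clique number equals $\omega(G)$, since $X_1\subseteq X_1\cup X_5$ forces $\omega(\langle X_1\cup X_5\rangle)\geq\omega(\langle X_1\rangle)=\omega(G)$, while being an induced subgraph of $G$ forces the reverse inequality.

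Next I would apply the hypothesis. Because the class of $\{2K_2,H\}$-free graphs admits $f(x)$ as a $\chi$-binding function, we obtain $\chi(\langle X_1\cup X_5\rangle)\leq f(\omega(\langle X_1\cup X_5\rangle))=f(\omega(G))$. Fix such a proper coloring of $\langle X_1\cup X_5\rangle$ on the palette $\{1,\ldots,f(\omega(G))\}$, and then assign three fresh colors to $X_2$, $X_3$ and $X_4$ respectively; this is legitimate precisely because each of these three sets is independent. Since the three new colors are pairwise distinct and distinct from the palette used on $X_1\cup X_5$, no monochromatic edge can arise, so we get a proper coloring of $G$ with at most $f(\omega(G))+3$ colors, whence $\chi(G)\leq f(\omega(G))+3$.

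There is essentially no obstacle left once Theorem \ref{hbf} is in hand; the only points worth checking are that patching the three independent sets on with brand-new colors keeps the coloring proper (immediate) and that the clique number is preserved when passing to $\langle X_1\cup X_5\rangle$, so that $f$ is evaluated at $\omega(G)$ rather than at some smaller value. If one prefers not to argue the clique-number equality, then monotonicity of $f$ together with $\omega(\langle X_1\cup X_5\rangle)\leq\omega(G)$ already suffices to reach the same bound.
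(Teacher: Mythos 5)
Your proof is correct and follows exactly the route the paper intends: it derives the corollary from the partition in Theorem \ref{hbf}, coloring the $\{2K_2,H\}$-free induced subgraph $\langle X_1\cup X_5\rangle$ with $f(\omega(G))$ colors and spending three fresh colors on the independent sets $X_2,X_3,X_4$. Your additional observation that $\omega(\langle X_1\cup X_5\rangle)=\omega(G)$ (so no monotonicity assumption on $f$ is needed) is a nice touch the paper leaves implicit.
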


\begin{corollary}[\cite{wagon1980bound}]\label{2k2}
If $G$ is a $2K_2$-free graph, then $\chi(G) \leq \binom{\omega(G)+1}{2}$.
\end{corollary}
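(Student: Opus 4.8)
The plan is to reduce the general statement to a single connected graph and then to split on the clique number, with Theorem \ref{hbf} doing the work in the main case. First I would invoke the observation preceding the corollary: a $2K_2$-free graph contains at most one component with an edge, since two independent edges lying in distinct components would induce a $2K_2$. Writing $G'$ for this (unique) non-trivial component, every other component is an isolated vertex, so $\chi(G)=\chi(G')$ and $\omega(G)=\omega(G')$ — both equalities being vacuous when $G$ is edgeless, where $\chi(G)=\omega(G)=1=\binom{2}{2}$. It therefore suffices to bound $\chi(G')$ for the connected $2K_2$-free graph $G'$.

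Next I would distinguish cases according to $\omega:=\omega(G')$. If $\omega\geq 3$, then by Theorem \ref{2k2characterization} the connected $2K_2$-free graph $G'$ is $\{P_5,butterfly,hammer\}$-free, in particular $\{butterfly,hammer\}$-free, and since $\omega\neq 2$ the bound $\chi(G')\leq\binom{\omega+1}{2}$ is exactly Theorem \ref{hbf}(ii). This is the substantive case, but all the real effort has already been spent in Theorem \ref{hbf}; the only genuine content remaining is the clique-size-two situation that Theorem \ref{hbf} deliberately excludes.

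Finally, for $\omega=2$ I would argue directly that $\chi(G')\leq 3=\binom{3}{2}$ via the Wagon partition with $A=\{v_1,v_2\}$ a maximum clique, so that $V(G')=U_1\cup U_2\cup C_{1,2}$ with $U_1=\{v_1\}\cup I_1$ and $U_2=\{v_2\}\cup I_2$. Each $U_k$ is independent (the $I_k$ are independent and $v_k$ is non-adjacent to $I_k$), so $\langle U_1\cup U_2\rangle$ is bipartite. The one point to check is that $C_{1,2}$ is independent: if $a,b\in C_{1,2}$ were adjacent, then since $a,b\notin N(v_1)\cup N(v_2)$ there is no edge between $\{a,b\}$ and $\{v_1,v_2\}$, whence $\langle\{a,b,v_1,v_2\}\rangle\cong 2K_2$, contradicting $2K_2$-freeness. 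Colouring $U_1,U_2,C_{1,2}$ with colours $1,2,3$ respectively gives a proper $3$-colouring, and combining the three cases yields $\chi(G)\leq\binom{\omega(G)+1}{2}$.

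I expect the main (and essentially only) obstacle to be noticing that the clique-size-two case, which Theorem \ref{hbf} discards, must be recovered separately here, and that it closes quickly because $2K_2$-freeness alone forces $C_{1,2}$ to be independent, without needing the $\omega\geq 3$ hypothesis that the proof of Theorem \ref{hbf} relies on.
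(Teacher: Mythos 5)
Your proposal is correct and follows essentially the same route as the paper: reduce to a single connected component, invoke Theorem \ref{hbf} (via the characterization in Theorem \ref{2k2characterization}) when $\omega(G)\neq 2$, and handle $\omega(G)=2$ directly by observing that $C_{1,2}$ is independent and $3$-colouring $U_1\cup U_2\cup C_{1,2}$. You merely spell out a few steps the paper leaves implicit, such as why $C_{1,2}$ is independent in the $\omega=2$ case.
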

\begin{proof}
Without loss of generality let $G$ be a connected $2K_2$-free graph.
When $\omega(G)\neq 2$, the proof follows from Theorem \ref{hbf}.
When $\omega(G)=2$, $V(G)=C_{1,2}\cup A\cup I_1\cup I_2$, where $C_{1,2}$ is an independent set.
Now by coloring $v_i\cup I_i$ with color $i$ for $i=1,2$ and by coloring $C_{1,2}$ with $3$, we get a proper $3$-coloring for $G$.
Hence $\chi(G)\leq 3$.
\end{proof}
%%%%%%%%%%%%%%%%%%%%%%%%%%%%%%%%%%%%%%%%%%%%%%%%%%%%%%%%%%%%%%%%%%%%%%%%%%%%%%%%%%%%%%%%%%%%%%%%

%%%%%%%%%%%%%%%%%%%%%%%%%%%%%%%%%%%%%%%%%%%%%%%%%%%%%%%%%%%%%%%%%%%%%%%%%%%%%%%%%%%%%%%%%%%%%%%%%%%%%%%%%%%%%%%%%%%
\subsection{$\{butterfly, hammer, C_4+K_p\}$-free graphs}\label{sect3.1}
In Section \ref{sect3.1} we establish a $\chi$-binding function for $\{butterfly, hammer, C_4+K_p\}$-free graphs.
\begin{theorem}\label{bhc4kpf}
Let $p$ be a positive integer and $G$ be a $\{butterfly,hammer,C_4+K_p\}$-free graph such that no component has clique size two, then $\chi(G)\leq \omega(G)+ \frac{p(p+1)}{2}$.			
\end{theorem}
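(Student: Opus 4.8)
The plan is to fix a maximum clique $A=\{v_1,\dots,v_\omega\}$ and run the whole argument inside the Wagon--Bharathi--Choudum partition. First I would clear the trivial reductions: since no component has clique size two and $\chi(G)=\max_H\chi(H)$ over the components $H$, it suffices to bound $\chi$ on a single connected component, and a component with $\omega=1$ is trivial, so I may assume $G$ is connected with $\omega(G)\ge 3$. Theorem \ref{hbf} then supplies the partition $V(G)=X_1\cup X_2\cup X_3\cup X_4\cup X_5$ in which $X_1=V_1=\bigcup_k U_k$ is a dominating $\omega$-partite graph, $X_2=C_{1,2}$, $X_3=C_{1,3}$, $X_4=C_{2,3}$ are independent, and $\langle X_1\cup X_5\rangle$ is $2K_2$-free. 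I would begin the colouring by assigning colour $k$ to $U_k=\{v_k\}\cup I_k$ for $1\le k\le\omega$; the entire task then reduces to colouring $V_2=\bigcup_{(i,j)\in L}C_{i,j}$ by reusing these $\omega$ colours wherever possible and spending only $\tfrac{p(p+1)}{2}$ further colours.

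The engine of the proof is the following dichotomy, which is exactly where $(C_4+K_p)$-freeness is used. For any two \emph{adjacent} vertices $a,b\in V_2$, I claim that either $N_A(a)\subseteq N_A(b)$, or $N_A(b)\subseteq N_A(a)$, or $|N_A(a)\cap N_A(b)|\le p-1$. Indeed, if neither $A$-neighbourhood contains the other, pick $v_s\in N_A(b)\setminus N_A(a)$ and $v_t\in N_A(a)\setminus N_A(b)$; then $a\sim b$, $a\sim v_t$, $b\sim v_s$, $v_s\sim v_t$ while $a\not\sim v_s$ and $b\not\sim v_t$, so $\langle\{a,b,v_s,v_t\}\rangle\cong C_4$. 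If moreover $|N_A(a)\cap N_A(b)|\ge p$, then any $p$ vertices of $N_A(a)\cap N_A(b)$ form a $K_p$ inside the clique $A$, disjoint from $\{v_s,v_t\}$ and completely joined to $\{a,b,v_s,v_t\}$, which produces an induced $C_4+K_p$, a contradiction. Hence the $A$-neighbourhoods of adjacent vertices of $V_2$ are nested or overlap in fewer than $p$ vertices; for $p=1$ this is precisely a laminar (nested-or-disjoint) condition, and in general it is a bounded-overlap relaxation of it.

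With this dichotomy I would build a \emph{global} colouring of $V_2$ that reuses the colours $1,\dots,\omega$ on its nested part and pays fresh colours only for the overlaps. Each $a\in C_{i,j}$ is non-adjacent to $v_i,v_j$ and, by Fact \ref{true}, satisfies $N_A(a)\supseteq\{v_1,\dots,v_{j-1}\}\setminus\{v_i\}$, so $a$ naturally ``wants'' a colour indexing one of its non-neighbours in $A$; the dichotomy forces any adjacent pair that would clash under such a reuse to have nested $A$-neighbourhoods, while the genuinely incomparable adjacencies share fewer than $p$ common $A$-neighbours. The intended accounting is to stratify these incomparable interactions into $p$ overlap-levels and charge $m$ fresh colours to the $m$-th level, so that the extra colours sum to $\sum_{m=1}^{p}m=\tfrac{p(p+1)}{2}$, with the three independent sets $C_{1,2},C_{1,3},C_{2,3}$ and the $2K_2$-free backbone $\langle X_1\cup X_5\rangle$ absorbed into the clique colours. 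Combining the $\omega$ clique colours with these $\tfrac{p(p+1)}{2}$ extra colours then gives $\chi(G)\le\omega(G)+\tfrac{p(p+1)}{2}$.

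I expect the main obstacle to be precisely the globalisation of the local dichotomy. A single nested chain in $V_2$ can have length of order $\omega$ (a threshold-type configuration already shows that $\omega(\langle V_2\rangle)$ need not be bounded in terms of $p$), so $V_2$ cannot be coloured with few colours in isolation: the clique colours must be reused coherently across all of $V_2$, and fresh colours must be reserved strictly for the bounded overlaps. Making the level-stratification canonical—so that adjacent vertices in the same level are separated by the nesting and those in different levels by the $<p$ overlap bound—and checking that the reuse never conflicts with the already-coloured $U_k$'s is the delicate, case-heavy heart of the argument; I anticipate it will require an induction on the overlap structure together with repeated appeals to butterfly- and hammer-freeness (as in the proof of Theorem \ref{hbf}) to exclude the configurations that would otherwise spoil the reuse.
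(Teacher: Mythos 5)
Your setup (the Wagon--Bharathi--Choudum partition, colour $k$ on $U_k$, reduction to colouring $V_2$) matches the paper's, and your local dichotomy for adjacent $a,b\in V_2$ --- nested $A$-neighbourhoods or $|N_A(a)\cap N_A(b)|\le p-1$, via an induced $C_4$ on $\{a,b,v_s,v_t\}$ joined to a $K_p$ in the common neighbourhood --- is a correct use of $(C_4+K_p)$-freeness. But the proof stops there: the ``accounting'' that is supposed to convert this dichotomy into $\tfrac{p(p+1)}{2}$ extra colours is only described as an intention (``stratify into $p$ overlap-levels and charge $m$ fresh colours to the $m$-th level''), with no definition of the levels, no proof that vertices in the same level with the same reused colour are non-adjacent, and no verification that reused colours do not clash with the already-coloured $U_k$'s. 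You explicitly flag this globalisation as the ``delicate, case-heavy heart of the argument'' and leave it open, so the proposal is a plan rather than a proof; the fact that $\sum_{m=1}^p m=\tfrac{p(p+1)}{2}$ is suggestive numerology, not an argument.

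The concrete facts that make the count work in the paper are different from your dichotomy, and you would need them (or substitutes). For $\omega\ge p+2$, $(C_4+K_p)$-freeness forces $[I_k,I_\ell]=\emptyset$ for all $k,\ell$, so $\bigcup_k I_k$ is a single independent set receiving \emph{one} new colour $\omega+1$ --- in particular the paper does \emph{not} keep colour $k$ on $I_k$ in this regime, which is precisely what frees colour $j$ to be reused on $\bigcup_{i<j}C_{i,j}$ for $j\ge p+3$ (shown independent and anticomplete to $v_j$) and colours $1,i,p+2$ to be reused on $C_{1,p+1}$, $C_{i,p+2}$, $C_{1,p+2}$. Your plan of colouring all of $U_k$ with colour $k$ and then reusing clique colours on $V_2$ runs directly into the conflict you yourself anticipate: a vertex of $C_{i,j}$ avoiding $v_j$ may still be adjacent to $I_j$. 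Without the independence of $\bigcup_k I_k$ (or an equivalent device) the reuse scheme cannot be made consistent, so the gap is not merely one of exposition.
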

\begin{proof}
Let $G$ be a connected $\{butterfly,hammer, C_4+K_p\}$-free graph such that $\omega(G)\neq 2$.
Let $V(G)=V_1\cup V_2$ where $V_1=A\cup\left(\mathop\cup\limits_{k=1}^{\omega}I_k\right)$ and $V_2=\left(\mathop\cup\limits_{(i,j)\in L}C_{i,j}\right)$.
Let  $\{1,2,\ldots,\omega(G)+\frac{p(p+1)}{2}\}$ be the set of colors.
For $1\leq k\leq \omega$, assign the color $k$ to the vertex $v_k$.
Since $G$ is a connected $\{butterfly,hammer\}$-free graph with $\omega(G)\neq 2$, by Claim 1 of Theorem \ref{hbf}, $C_{i,j}$ is an independent set for every $(i,j)\in L$.
In order to color the vertices in $V_2\cup \left(\mathop\cup\limits_{k=1}^{\omega}I_k\right)$, let us break the proof into two cases depending upon the value of $\omega(G)$.

\noindent\textbf{Case 1} $1\leq \omega(G)\leq p+1$

For $\omega(G)=1$, there is nothing to prove.
Let $\omega(G)\geq 3$.
For $1\leq k\leq \omega$, assign the color $k$ to the vertices in $I_k$.
Since each $C_{i,j}$ is an independent set, each $C_{i,j}$ can be properly colored by assigning a new color.
Therefore $V_2$ can be colored with at most $\sum\limits_{j=2}^{p+1} j-1= \frac{p(p+1)}{2}$ colors.   
Hence $\chi(G)\leq \omega(G)+\frac{p(p+1)}{2}$.

\noindent\textbf{Case 2} $\omega(G)\geq p+2$

We begin by showing that for $1\leq k,\ell\leq \omega$,  $[I_k, I_{\ell}]=\emptyset$.
Suppose for some $k,\ell\in\{1,2,\ldots,\omega\}$, there exist vertices $a\in I_k$ and $b\in I_{\ell}$ such that $ab\in E(G)$,
then there exists integers ${q_1},{q_2},\ldots,{q_p}\in \{1,2,\ldots,\omega\}\backslash\{k,\ell\}$ such that $\langle\{a,v_{\ell},v_k,b,v_{q_1},v_{q_2},\ldots,v_{q_p}\}\rangle\cong C_4+K_p$, a contradiction.
Hence $(\mathop\cup\limits_{k=1}^\omega I_k)$ is an independent set and can be colored with a single color. Let it be  $\omega+1$.
Next, for $j\geq p+3$, we shall show that $[C_{i,j}, C_{m,j}]=\emptyset$, where $i,m\in\{1,2,\ldots,j-1\}$ and $i\neq m$.
Suppose there exist vertices $a\in C_{i,j}$ and $b\in C_{m,j}$ such that $ab\in E(G)$.
By Fact \ref{true} we can find integers ${q_1},{q_2},\ldots,{q_p}\in \{1,2,\ldots,j\}\backslash\{i,m,j\}$ such that $\langle \{a,v_m,v_i,b,v_{q_1},v_{q_2},\ldots,v_{q_p}\}\rangle\cong C_4+K_p$, a contradiction.
Thus, for every $j\geq p+3 $, $(\mathop\cup\limits_{i=1}^{j-1} C_{i,j})$ is an independent set.
One can observe that, $[v_j,(\mathop\cup\limits_{i=1}^{j-1} C_{i,j})]=\emptyset$ and hence for any $j\geq p+3$, we can assign the color $j$ to the vertices in $(\mathop\cup\limits_{i=1}^{j-1}C_{i,j})$.
Next, by definition of $C_{i,j}$, we get that $[\{v_i,v_j\},C_{i,j}]=\emptyset$.
Hence we can assign the color $1$, $p+2$ and $i$ for the vertices in $C_{1,p+1}$, $C_{1,p+2}$ and $C_{i,p+2}$ ($i\geq 2$) respectively.
Now for the remaining $C_{i,j}$, we can color them by assigning a new color to each of the $C_{i,j}$ and hence $G$ can be colored with at most $\omega+1+\sum\limits_{j=2}^{p+1} (j-1) -1= \omega+\frac{p(p+1)}{2}$.
Next, if $G$ is a disconnected $\{butterfly, hammer, C_4+K_p\}$-free graph such that no component has clique size two, then by similar arguments we can show that each component is $\left(\omega(G)+\frac{p(p+1)}{2} \right)$- colorable and hence $\chi(G)\leq \omega(G)+\frac{p(p+1)}{2}$.
\end{proof}
%%%%%%%%%%%%%%%%%%%%%%%%%%%%%%%%%%%%%%%%%%%%%%%%%%%%%%%%%%%%%%%%%%%%%%%%%%%%%%%%%%%%%%%%%%%%%%%%%%%%%%%%%%%%%%%%%%%%%%%%
As a simple consequence of Corollary \ref{2k2} and Theorem \ref{bhc4kpf} we get the following results.
%%%%%%%%%%%%%%%%%%%%%%%%%%%%%%%%%%%%%%%%%%%%%%%%%%%%%%%%%%%%%%%%%%%%%%%%%%%%%%%%%%%%%%%%%%%%%%%%%%%%%%%%%%%%%%%%%%%%%%%
\begin{theorem}\label{2k2kpc4}
Let $p$ be a positive integer and $G$ be a $\{2K_2,C_4+K_p\}$-free graph, then $\chi(G)\leq \omega(G)+ \frac{p(p+1)}{2}$.
\end{theorem}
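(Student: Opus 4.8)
The plan is to derive this from the two cited results by a single case split on $\omega(G)$, using the fact that a $2K_2$-free graph is automatically $\{butterfly,hammer\}$-free. Indeed, the components of a $2K_2$-free graph are connected $2K_2$-free graphs, so by Theorem \ref{2k2characterization} each is $\{P_5,butterfly,hammer\}$-free; since being free of a fixed induced subgraph is a componentwise property, $G$ itself is $\{butterfly,hammer\}$-free, and hence $\{butterfly,hammer,C_4+K_p\}$-free. Thus Theorem \ref{bhc4kpf} is almost immediately applicable --- the only hypothesis not handed to us for free is that no component of $G$ has clique size two.

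For the case $\omega(G)\neq 2$, I would first record the standard observation that a $2K_2$-free graph has at most one component containing an edge: if two distinct components each carried an edge, those two edges would together form an induced $2K_2$. Consequently every component other than this (possible) unique non-trivial one is a single vertex of clique size $1$. Now if some component of $G$ had clique size exactly $2$, it would have to be the unique non-trivial component, forcing $\omega(G)=2$, contrary to assumption. Hence, when $\omega(G)\neq 2$, no component of $G$ has clique size two, and Theorem \ref{bhc4kpf} applies verbatim to give $\chi(G)\leq \omega(G)+\frac{p(p+1)}{2}$.

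The case $\omega(G)=2$ must be handled separately, since Theorem \ref{bhc4kpf} explicitly excludes it. Here I would invoke Corollary \ref{2k2}, which gives $\chi(G)\leq \binom{\omega(G)+1}{2}=\binom{3}{2}=3$. It then remains only to check that this lies within the target bound: for every $p\geq 1$ we have $\frac{p(p+1)}{2}\geq 1$, so $\omega(G)+\frac{p(p+1)}{2}=2+\frac{p(p+1)}{2}\geq 3\geq \chi(G)$, and the claimed inequality holds.

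There is no serious obstacle here; the content is entirely in correctly bridging the hypotheses of the two ingredients. The one point that needs care --- and the only place where the argument is not a literal citation --- is the reduction of the ``no component has clique size two'' hypothesis of Theorem \ref{bhc4kpf} to the $2K_2$-freeness of $G$, via the at-most-one-nontrivial-component observation, together with the separate treatment of the degenerate value $\omega(G)=2$, where that theorem says nothing and Corollary \ref{2k2} must be used instead.
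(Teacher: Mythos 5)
Your proposal is correct and follows exactly the route the paper intends: the paper states this theorem as "a simple consequence of Corollary \ref{2k2} and Theorem \ref{bhc4kpf}", and you supply precisely the bridging details (at most one non-trivial component in a $2K_2$-free graph, hence no component of clique size two when $\omega\neq 2$, with the $\omega=2$ case handled by Corollary \ref{2k2} and the check $3\leq 2+\frac{p(p+1)}{2}$). Nothing is missing.
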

%%%%%%%%%%%%%%%%%%%%%%%%%%%%%%%%%%%%%%%%%%%%%%%%%%%%%%%%%%%%%%%%%%%%%%%%%%%%%%%%%%%%%%%%%%%%%%%%%%%%%%%%%%%%%%%%%%%%%%%%
\begin{corollary}\cite{prashant2021chromatic,caldam2021chromaticspringerversion}\label{2k2k1c4} 
If $G$  is a $\{2K_2,K_1+C_4\}$-free graph with, then $\chi(G)\leq \omega(G)+1$.
\end{corollary}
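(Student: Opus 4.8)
The plan is to recognize the statement as the single-vertex instance of the immediately preceding Theorem~\ref{2k2kpc4}, so that no new combinatorial argument is required. The first step is to observe that the join operation is commutative up to isomorphism, so that $K_1+C_4\cong C_4+K_1$; since $K_1$ is precisely the complete graph $K_p$ on $p=1$ vertex, the forbidden graph $K_1+C_4$ coincides with $C_4+K_p$ when $p=1$. Hence every $\{2K_2,K_1+C_4\}$-free graph is exactly a $\{2K_2,C_4+K_p\}$-free graph with $p=1$, and Theorem~\ref{2k2kpc4} applies verbatim (note that neither statement imposes connectivity, so the transfer is clean for disconnected $G$ as well).

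It then remains only to evaluate the bound at $p=1$. Substituting $p=1$ into the conclusion $\chi(G)\leq\omega(G)+\frac{p(p+1)}{2}$ of Theorem~\ref{2k2kpc4} yields $\frac{p(p+1)}{2}=\frac{1\cdot 2}{2}=1$, whence $\chi(G)\leq\omega(G)+1$, which is exactly the asserted inequality. This completes the deduction.

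In this sense there is no genuine internal obstacle to the corollary: all the combinatorial content resides in Theorem~\ref{bhc4kpf} (the explicit colouring of $\{butterfly,hammer,C_4+K_p\}$-free graphs) together with its transfer to the $2K_2$-free setting through Corollary~\ref{2k2}, both of which are already established and may be invoked directly. The only point requiring any care is the bookkeeping identification $K_1+C_4\cong C_4+K_p$ at $p=1$, i.e. reading $K_1$ as a one-vertex clique; once this is made explicit the result is immediate. Were the parent theorem unavailable, I would instead specialize its argument to $p=1$, where forbidding $C_4+K_1$ forces $[I_k,I_\ell]=\emptyset$ for all $k,\ell$ and $[C_{i,j},C_{m,j}]=\emptyset$ for $j$ large, collapsing the corresponding classes to single colour classes and reproducing the same $\omega(G)+1$ count; but since Theorem~\ref{2k2kpc4} is given, this reduction is unnecessary.
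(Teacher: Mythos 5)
Your proposal is correct and matches the paper's intent exactly: the paper presents this corollary as an immediate consequence of Theorem~\ref{2k2kpc4} (equivalently of Corollary~\ref{2k2} and Theorem~\ref{bhc4kpf}), and your identification of $K_1+C_4$ with $C_4+K_p$ at $p=1$ followed by substituting $\frac{p(p+1)}{2}=1$ is precisely that specialization.
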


Note that in \cite{prashant2021chromatic}, it has been shown that the bound obtained in Corollary \ref{2k2k1c4} is tight. 
Hence the $\chi$-binding function given in Theorem \ref{bhc4kpf} and Theorem \ref{2k2kpc4} could not be improved.
However, the question of whether this bound is optimal remains open for $p\geq 2$ .

%%%%%%%%%%%%%%%%%%%%%%%%%%%%%%%%%%%%%%%%%%%%%%%%%%%%%%%%%%%%%%%%%%%%%%%%%%%%%%%%%%%%%%%%%%%%%%%%%%%%%%%%%%%%%%%%%%%%%%%
\subsection{$\{butterfly, hammer, P_4+K_p\}$-free graphs}\label{sect3.2}
Let us begin Section \ref{sect3.2} by making some simple observations on $(P_4+K_p)$-free graphs.
\begin{lemma}\label{p4kplemma}
Let $p$ be a positive integer and $G$ be a $(P_4+K_p)$-free graph with $\omega(G)\geq p+2$.
Then the following hold

\begin{enumerate}[(i)]

\item\label{relationIlIr} For $k,\ell\in\{1,2,\ldots,\omega(G)\}$, $[I_k,I_{\ell}]$ is complete. Thus $\langle V_1\rangle$ is a complete multipartite graph with  $U_{k}=\{v_{k}\}\cup I_{k}$, $1\leq k\leq \omega(G)$ as its partitions.

\item\label{GIp} For $(i,j)\in L$ with $j\geq p+2$, if $a\in C_{i,j}$ such that  $av_{\ell}\notin E(G)$, for some $\ell\in\{1,2,\ldots,\omega\}$, then $[a,I_{\ell}]=\emptyset$.
\end{enumerate}
\end{lemma}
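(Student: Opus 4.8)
The plan is to prove both parts by contradiction, in each case producing a forbidden induced $P_4+K_p$ whenever the asserted adjacency pattern fails; the hypothesis $\omega(G)\geq p+2$ will always be exactly what guarantees enough clique vertices to build the $K_p$ part.

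For (i), I would fix $k\neq \ell$ and suppose there are $a\in I_k$, $b\in I_\ell$ with $ab\notin E(G)$. Recalling that a vertex of $I_m$ is adjacent to every $v_t$ with $t\neq m$ and non-adjacent to $v_m$, one checks that among $\{a,v_\ell,v_k,b\}$ the only edges are $av_\ell$, $v_\ell v_k$, $v_k b$, so this set induces the path $a-v_\ell-v_k-b$. Since $\omega(G)\geq p+2$, the set $\{1,\ldots,\omega\}\setminus\{k,\ell\}$ contains at least $p$ indices $q_1,\ldots,q_p$, and each $v_{q_t}$ is adjacent to all of $a,v_\ell,v_k,b$ (as $a\in I_k$ and $b\in I_\ell$ are adjacent to every $v_s$ with $s\notin\{k,\ell\}$) while the $v_{q_t}$ are mutually adjacent. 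Thus $\langle\{a,v_\ell,v_k,b,v_{q_1},\ldots,v_{q_p}\}\rangle\cong P_4+K_p$, a contradiction, so $[I_k,I_\ell]$ is complete. The complete-multipartite conclusion then follows at once: each $U_k=\{v_k\}\cup I_k$ is independent, while for $k\neq\ell$ the completeness of $[I_k,I_\ell]$ together with the adjacencies forced by membership in the $I_m$'s makes $[U_k,U_\ell]$ complete.

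For (ii), I would again argue by contradiction: suppose $a\in C_{i,j}$ with $j\geq p+2$, $av_\ell\notin E(G)$, and $ab\in E(G)$ for some $b\in I_\ell$. By Fact~\ref{true}, $a$ is adjacent to every $v_t$ with $t\in\{1,\ldots,j-1\}\setminus\{i\}$ and, by definition of $C_{i,j}$, non-adjacent to $v_i$ and $v_j$. Since $av_\ell\notin E(G)$, the index $\ell$ cannot lie in $\{1,\ldots,j-1\}\setminus\{i\}$, so either $\ell=i$ or $\ell\geq j$. When $\ell\geq j$ I would take $v_\ell,v_i,b,a$: using $i\neq\ell$ the only edges are $v_\ell v_i$, $v_i b$, $ba$, giving the induced path $v_\ell-v_i-b-a$. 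When $\ell=i$ I would instead take $a,b,v_j,v_i$, whose only edges are $ab$, $bv_j$, $v_j v_i$, again an induced $P_4$. In both cases the $p$ clique vertices for the $K_p$ are drawn from $\{1,\ldots,j-1\}\setminus\{i\}$, a set of size $j-2\geq p$ (this is precisely where $j\geq p+2$ enters), and each such $v_q$ is a common neighbor of all four path vertices, producing $P_4+K_p$ and the desired contradiction.

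The edge-by-edge verification that the four chosen vertices induce a $P_4$ and that the selected clique vertices are common neighbors is mechanical, given the explicit adjacency descriptions of $I_m$ and $C_{i,j}$. The one genuine decision point, and the step I expect to require the most care, is the case split in (ii) on the position of $\ell$ relative to $j$ together with the accompanying choice of completing vertices ($v_\ell,v_i$ versus $v_i,v_j$): picking the correct pair in each case is what simultaneously satisfies all six induced-subgraph conditions while still leaving at least $p$ clique vertices free for the $K_p$.
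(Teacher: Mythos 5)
Your proposal is correct and follows essentially the same route as the paper: in (i) the path $a\text{--}v_\ell\text{--}v_k\text{--}b$ completed by $p$ clique vertices outside $\{k,\ell\}$, and in (ii) the observation (via Fact~\ref{true}) that $\ell=i$ or $\ell\geq j$, followed by the path $a\text{--}b\text{--}v_k\text{--}v_\ell$ with $k\in\{i,j\}\setminus\{\ell\}$ completed by clique vertices below index $j$ avoiding $i$. The only cosmetic difference is that the paper handles your two subcases of (ii) uniformly through the choice of $k$, and draws its $K_p$ from $\{v_1,\ldots,v_{p+1}\}\setminus\{v_i\}$ rather than $\{v_1,\ldots,v_{j-1}\}\setminus\{v_i\}$.
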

\begin{proof}
Let $G$ be a $(P_4+K_p)$-free graph with $\omega(G)\geq p+2$, $p\geq 1$.
\begin{enumerate}[(i)]

\item Suppose $[I_k,I_{\ell}]$ is not complete for some $k,\ell\in\{1,2,\ldots,\omega\}$ and there exists a pair of non-adjacent vertices $a\in I_k$ and $b\in I_{\ell}$ , then as $\omega(G)\geq p+2$, there exists integers $q_1, q_2,\ldots, q_p$ in $\{1,2,\ldots, \omega\}\backslash\{k,\ell\}$ such that  $\langle\{a,v_{\ell},v_k,b,v_{q_1},\ldots, v_{q_p}\}\rangle\cong P_4+K_p$, a contradiction.

\item On the contrary assume that there exist vertices $a\in C_{i,j}$ and $b\in I_{\ell}$ such that $av_{\ell}\notin E(G)$ and $ab\in E(G)$.
By Fact \ref{true}, $\ell\in \{i,j,j+1,\ldots,\omega\}$.
Let $k\in\{i,j\}\backslash\{\ell\}$ and since $j\geq p+2$, $\langle\{a,b,v_k,v_{\ell},\{\{v_1,v_2,\ldots,v_{p+1}\}\backslash\{v_i\}\}\}\rangle$ contains an induced $P_4+K_p$, a contradiction.
\end{enumerate}
\end{proof}
%%%%%%%%%%%%%%%%%%%%%%%%%%%%%%%%%%%%%%%%%%%%%%%%%%%%%%%%%%%%%%%%%%%%%%%%%%%%%%%%%%%%%%%%%%%%%%%%%
%%%%%%%%%%%%%%%%%%%%%%%%%%%%%%%%%%%%%%%%%%%%%%%%%%%%%%%%%%%%%%%%%%%%%%%%%%%%%%%%%%%%%%%%%%%%%%%%
%%%%%%%%%%%%%%%%%%%%%%%%%%%%%%%%%%%%%%%%%%%%%%%%%%%%%%%%%%%%%%%%%%%%%%%%%%%%%%%%%%%%%%%%%%%%%%%%

%%%%%%%%%%%%%%%%%%%%%%%%%%%%%%%%%%%%%%%%%%%%%%%%%%%%%%%%%%%%%%%%%%%%%%%%%%%%%%%%%%%%%%%%%%%%%%%%
Now, let us establish a linear $\chi$-binding function for $\{butterfly, hammer, P_4+K_p\}$-free graphs.
When $p=0$, $P_4+K_p\cong P_4$ and since $P_4$-free graphs are perfect, $G$ is $\omega$-colorable.
So let us consider $p\geq 1$.
%%%%%%%%%%%%%%%%%%%%%%%%%%%%%%%%%%%%%%%%%%%%%%%%%%%%%%%%%%%%%%%%%%%%%%%%%%%%%%%%%%%%%%%%%%%%%%%%%%%
\begin{theorem}\label{bhp4kpf}
Let $p$ be a positive integer and $G$ be a $\{butterfly,hammer,P_4+K_p\}$-free graph such that no component has clique size two, then\\
$\chi(G)\leq \left\{
\begin{array}{lcl}
\omega(G)+ \frac{p(p+1)}{2}					& \textnormal{for} & 1\leq\omega(G)\leq p+1	\\ 
\omega(G)+  \frac{p(p+1)}{2} -1  	& \textnormal{for} & \omega(G)\geq p+2.
\end{array}
\right.$
\end{theorem}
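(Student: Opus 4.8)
The plan is to adapt the colouring used for $\{butterfly,hammer,C_4+K_p\}$-free graphs in Theorem \ref{bhc4kpf}, with the single extra saved colour coming from the stronger structure of $\langle V_1\rangle$ available when $\omega\geq p+2$. As there, it suffices to treat a connected $G$ with $\omega(G)\geq 3$: the case $\omega(G)=1$ is trivial, $\omega(G)=2$ is excluded by hypothesis, and a disconnected $G$ is handled component by component, each component's bound being dominated by the stated expression since that expression is non-decreasing in $\omega$ (its two branches agree at $\omega=p+1$). So I would fix a maximum clique $A=\{v_1,\dots,v_\omega\}$ and the associated partition $V(G)=V_1\cup V_2$ with $V_1=\bigcup_{k}U_k$ and $V_2=\bigcup_{(i,j)\in L}C_{i,j}$, recalling from Claim 1 in the proof of Theorem \ref{hbf} that every $C_{i,j}$ is independent.

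For $1\leq\omega(G)\leq p+1$ I would colour each $U_k$ with colour $k$ and give every $C_{i,j}$ a fresh colour; since there are $\binom{\omega}{2}\leq\binom{p+1}{2}=\frac{p(p+1)}{2}$ of the latter, this uses at most $\omega+\frac{p(p+1)}{2}$ colours, settling the first range.

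The work is the range $\omega(G)\geq p+2$, where the target is one smaller. First I would invoke Lemma \ref{p4kplemma}(\ref{relationIlIr}): $\langle V_1\rangle$ is complete multipartite with parts $U_k$, so colouring $U_k$ with colour $k$ is proper and uses exactly $\omega$ colours. This is precisely where a colour is saved relative to Theorem \ref{bhc4kpf}: there $\bigcup_k I_k$ was an independent set needing its own extra colour $\omega+1$, whereas here it is absorbed into the colours $1,\dots,\omega$. Next I would recycle the clique colours on the ``high'' sets: for fixed $i$ and $j<j'$ with $j,j'\geq p+2$, the endpoints $a\in C_{i,j}$, $b\in C_{i,j'}$ of a putative edge, together with $v_j$ and $v_i$, induce a $P_4$ (namely $a$-$b$-$v_j$-$v_i$), which extends to an induced $P_4+K_p$ via the at least $p$ clique vertices in $\{v_1,\dots,v_{j-1}\}\setminus\{v_i\}$ adjacent to both $a$ and $b$; hence $[C_{i,j},C_{i,j'}]=\emptyset$, and with Lemma \ref{p4kplemma}(\ref{GIp}) giving $[C_{i,j},I_i]=\emptyset$ one sees that $U_i\cup\bigl(\bigcup_{j\geq p+2}C_{i,j}\bigr)$ is independent. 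Thus every part of $V_2$ with second index at least $p+2$ reuses one of the colours $1,\dots,\omega-1$ at no cost, and it remains to colour the low sets $C_{i,j}$ with $2\leq j\leq p+1$, of which there are $\frac{p(p+1)}{2}$, using only $\frac{p(p+1)}{2}-1$ further colours.

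The hard part will be exactly this last step. Fact \ref{true} endows a vertex of $C_{i,j}$ with only $j-2$ guaranteed clique-neighbours, which is at least $p$ only when $j\geq p+2$; for $j\leq p+1$ the $P_4+K_p$ mechanism loses all its force, and $C_{1,p+1}$ may contain ``clique-poor'' vertices adjacent to nothing of $A$ beyond $v_2,\dots,v_p$ and adjacent to $I_1$, so colour $1$ cannot simply be reassigned to the whole set as in the $C_4+K_p$ argument. I would therefore exploit that each low set is independent and colour its vertices individually: a vertex with at least $p$ clique-neighbours admits, by the same $P_4+K_p$ argument, an already-used clique colour, so that only the clique-poor vertices can force fresh colours. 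The crux is to show that across the $\binom{p+1}{2}$ low sets one fresh colour can always be economised and that the clique-poor vertices never exhaust the palette; making this bookkeeping precise, presumably by combining the per-vertex reuse with the $2K_2$-free structure of $\langle V_1\cup X_5\rangle$ from Theorem \ref{hbf}, is where I expect the main difficulty to lie.
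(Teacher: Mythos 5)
Your proposal is incomplete at exactly the step that carries the whole theorem. Case 1 and the recycling of the ``high'' sets in Case 2 are fine (your grouping $U_i\cup\bigl(\bigcup_{j\geq p+2}C_{i,j}\bigr)$ by first index, justified via the induced $P_4$ on $a,b,v_j,v_i$ plus the common clique neighbours $\{v_1,\dots,v_{j-1}\}\setminus\{v_i\}$, is a valid alternative to the paper's grouping by second index, where $\bigcup_i C_{i,j}$ receives colour $j$ for $j\geq p+3$ and the sets $C_{i,p+2}$ are recycled separately). But after that you are left needing to colour the $\frac{p(p+1)}{2}$ low sets $C_{i,j}$ with $j\leq p+1$ using only $\frac{p(p+1)}{2}-1$ fresh colours, and you explicitly stop there, offering only a heuristic (per-vertex reuse for ``clique-rich'' vertices plus unspecified bookkeeping with the $2K_2$-free part $\langle X_1\cup X_5\rangle$). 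That saving of one colour \emph{is} the theorem --- without it you only recover the bound $\omega+\frac{p(p+1)}{2}$ already available from Theorem \ref{bhc4kpf}-style arguments --- so the proposal has a genuine gap, not just a missing routine verification.

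For comparison, the paper closes this gap by singling out the one set $C_{1,p+1}$ and splitting it according to adjacency to $I_1$: a vertex $a\in C_{1,p+1}$ with $[a,I_1]=\emptyset$ receives the recycled colour $1$ (it is already non-adjacent to $v_1$ by definition of $C_{1,p+1}$); a vertex $a$ with a neighbour $b\in I_1$ is shown, via the induced $P_4$ on $a,b,v_{p+1},u_r$ (extended by $v_2,\dots,v_p$ to a $P_4+K_p$ using Lemma \ref{p4kplemma}(\ref{relationIlIr}) and Fact \ref{true}), to satisfy $[a,U_r]=\emptyset$ for all $r\geq p+2$, hence also $[a,C_{p+1,p+2}]=\emptyset$ and $[a,I_{p+1}]=\emptyset$, so it can safely receive the recycled colour $p+1$. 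Thus $C_{1,p+1}$ consumes no fresh colour and the remaining $\frac{p(p+1)}{2}-1$ low sets each take one. Note that this argument is a global statement about where the neighbourhood of an $I_1$-adjacent vertex of $C_{1,p+1}$ can lie, not a count of its clique neighbours, so your proposed ``clique-poor versus clique-rich'' dichotomy is not the mechanism that makes the saving work.
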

\begin{proof}
Let $G$ be a connected $\{butterfly,hammer,P_4+K_p\}$-free graph such that $\omega(G)\neq 2$.
Let $V(G)=V_1\cup V_2$, where $V_1=\mathop\cup\limits_{1\leq k\leq\omega}(\{v_k\}\cup I_k)=U_k$ and $V_2=\mathop\cup\limits_{(i,j)\in L}C_{i,j}$.
Clearly, the vertices of $V_1$ can be colored with $\omega(G)$ colors by assigning the color $k$ to the vertices in $U_k$, for $1\leq k\leq \omega$. 
Since $G$ is a connected $\{butterfly,hammer\}$-free graph with $\omega(G)\neq 2$, one can observe that by Claim 1 of Theorem \ref{hbf}, $C_{i,j}$ is an independent set for every $(i,j)\in L$.
In order to color the vertices of $V_2$, let us break the proof into two cases depending upon the value of $\omega(G)$.

\noindent\textbf{Case 1} $1\leq \omega(G)\leq p+1$

For $\omega(G)=1$, there is nothing to prove.
Since  $C_{i,j}$ is an independent set for every $(i,j)\in L$, the vertices in $C_{i,j}$ can be properly colored by assigning a new color each and therefore $V_2$ can be colored with atmost $\sum\limits_{j=2}^{p+1} j-1= \frac{p(p+1)}{2}$ colors.   
Hence $\chi(G)\leq \omega(G)+\frac{p(p+1)}{2}$.

\noindent\textbf{Case 2} $\omega(G)\geq p+2$

For $j\geq p+3$, we shall show that $\left(\mathop\cup\limits_{i=1}^{j-1} C_{i,j}\right)$ is an independent set.
Suppose for $j\geq p+3$ there exists two distinct $(i,j),(m,j)\in L$ such that  $a\in C_{i,j}$, $b\in C_{m,j}$ and $ab\in E(G)$, then we can find integers $q_1,q_2,\ldots,q_p\in \{1,2,\ldots,j-1\}\backslash \{i,m\}$ such that $\langle\{a,b,v_i,v_j,v_{q_1},v_{q_2},\ldots,v_{q_p}\}\rangle\cong P_4+K_p$, a contradiction.
Hence by using (\ref{GIp}) of Lemma \ref{p4kplemma}, for $j\geq p+3$ we can assign the color $j$ to each vertex in $\left(\mathop\cup\limits_{i=1}^{j-1} C_{i,j}\right)$.
Next, by using (\ref{GIp}) of Lemma \ref{p4kplemma}, we can assign the color $p+2$ to the vertices in $C_{1,p+2}$ and color $k$ to the vertices in $C_{k,p+2}$, $2\leq k\leq p+1$.
Let us next color the vertices of $C_{1,p+1}$. 
If $a\in C_{1,p+1}$ such that $[a,I_1]=\emptyset$, then assign the color $1$ to $a$.
If $[a,I_1]\neq \emptyset$ then we shall show that for $r\geq p+2$, $[a,U_r]=\emptyset$.
Let $a\in C_{1,2}$ and $b\in I_1$ such that $ab\in E(G)$.
If there exists an $r\geq p+2$ such that $[a,U_r]\neq \emptyset$, then let $u_r\in U_r$ such that $au_r\in E(G)$.
Then by using (\ref{relationIlIr}) of Lemma \ref{p4kplemma} and Fact \ref{true} we can see that if $p=1$, then $\langle\{a,b,v_{p+1},v_1,u_r\}\rangle\cong K_p+P_4$, a contradiction and if $p\geq 2$, then $\langle \{a,b,v_{p+1},v_1,u_r,v_2,\ldots,v_p\}\rangle\cong P_4+K_p$, a contradiction.
Hence $[a,U_r]=\emptyset$ for $r\geq p+2$ and this in turns implies that $[a,C_{p+1,p+2}]=\emptyset$ (Suppose there exists a vertex $c\in C_{p+1,p+2}$ such that $ac\in E(G)$, then $\langle{a,c,v_1,v_{p+1},v_{p+2}}\rangle\cong hammer$, a contradiction). 
Similarly, one can show that if $[a,I_1]\neq \emptyset$, then $[a,I_{p+1}]=\emptyset$.
Therefore we can assign the color $p+1$ to vertex $a$ and the vertices in $C_{1,p+1}$ can be colored with colors $1$ and $p+1$. 
Finally, the vertices of the remaining $C_{i,j}$ can be colored by assigning a new color to each $C_{i,j}$ and hence $G$ can be colored with at most $\omega+\sum\limits_{j=2}^{p+1} (j-1) -1= \omega+\frac{p(p+1)}{2}-1$ colors.
Clearly this is a proper coloring of $G$.
Since the clique size of each component is less than or equal to $\omega(G)=\omega$, we see that each component is $\left(\omega(G)+\frac{p(p+1)}{2}-1\right)$-colorable.
\end{proof}
%%%%%%%%%%%%%%%%%%%%%%%%%%%%%%%%%%%%%%%%%%%%%%%%%%%%%%%%%%%%%%%%%%%%%%%%%%%%%%%%%%%%%%%%%%%%%%%%%%%%%%%%%%%%%%%%%%%%%%%%%%%%

By using Corollary \ref{2k2} and Theorem \ref{bhp4kpf} we get a $\chi$-binding function for $\{2K_2,P_4+K_p\}$-free graphs.
\begin{theorem}\label{2k2p4kp}
Let $p$ be a positive integer and $G$ be a $\{2K_2,P_4+K_p\}$-free graph, then\\
$\chi(G)\leq \left\{
\begin{array}{lcl}
\omega(G)+ \frac{p(p+1)}{2}					& \textnormal{for} & 1\leq\omega(G)\leq p+1	\\ 
\omega(G)+  \frac{p(p+1)}{2} -1  	& \textnormal{for} & \omega(G)\geq p+2.
\end{array}
\right.$
\end{theorem}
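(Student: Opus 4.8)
The plan is to deduce this statement directly from the two results just obtained, Corollary \ref{2k2} and Theorem \ref{bhp4kpf}, together with the elementary observation that each of the forbidden graphs $butterfly$ and $hammer$ itself contains an induced $2K_2$. First I would record that every $2K_2$-free graph is automatically $\{butterfly,hammer\}$-free: if $G$ contained an induced $butterfly$ or an induced $hammer$, then, since each of these graphs has an induced $2K_2$ (e.g. the two ``outer'' edges of the $butterfly$, or the base edge of the triangle together with the end edge of the tail in the $hammer$), $G$ would contain an induced $2K_2$, contrary to hypothesis. Combined with the assumed $(P_4+K_p)$-freeness, this shows that every $\{2K_2,P_4+K_p\}$-free graph is in fact $\{butterfly,hammer,P_4+K_p\}$-free, which is precisely the class treated in Theorem \ref{bhp4kpf}.

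The argument then splits on the value of $\omega(G)$. For $\omega(G)\neq 2$, I would use that a $2K_2$-free graph has at most one component containing an edge, so every component of $G$ has clique number either $\omega(G)$ or $1$; since $\omega(G)\neq 2$, no component has clique size two. Hence the hypothesis of Theorem \ref{bhp4kpf} is met and that theorem applies verbatim, yielding $\chi(G)\le \omega(G)+\frac{p(p+1)}{2}$ when $1\le\omega(G)\le p+1$ and $\chi(G)\le \omega(G)+\frac{p(p+1)}{2}-1$ when $\omega(G)\ge p+2$, exactly the two stated bounds.

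The one situation not reachable through Theorem \ref{bhp4kpf} is $\omega(G)=2$, where the unique nontrivial component has clique size two and the hypothesis of that theorem fails. Here I would instead invoke Corollary \ref{2k2}, which gives $\chi(G)\le\binom{\omega(G)+1}{2}=3$. Since $p\ge 1$ forces $\omega(G)=2\le p+1$, the relevant target is the first-case bound $\omega(G)+\frac{p(p+1)}{2}=2+\frac{p(p+1)}{2}$, and because $\frac{p(p+1)}{2}\ge 1$ we have $3\le 2+\frac{p(p+1)}{2}$; thus the bound holds in this case too.

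The main (and essentially the only) obstacle is the bookkeeping at the boundary value $\omega(G)=2$: one must notice that Theorem \ref{bhp4kpf} deliberately excludes components of clique size two, confirm that this is the sole gap, and then close it with Corollary \ref{2k2} via the trivial numerical inequality $3\le 2+\tfrac{p(p+1)}{2}$. Everything else is an immediate transfer from the $\{butterfly,hammer,P_4+K_p\}$-free setting, so no new combinatorial work is needed.
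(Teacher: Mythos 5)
Your proposal is correct and follows exactly the route the paper intends: the paper derives this theorem as an immediate consequence of Corollary \ref{2k2} and Theorem \ref{bhp4kpf} without writing out the details, and you have supplied precisely those details (that $2K_2$-freeness implies $\{butterfly,hammer\}$-freeness, that the ``no component of clique size two'' hypothesis only fails when $\omega(G)=2$, and that this residual case is absorbed by the bound $3\le 2+\tfrac{p(p+1)}{2}$).
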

%%%%%%%%%%%%%%%%%%%%%%%%%%%%%%%%%%%%%%%%%%%%%%%%%%%%%%%%%%%%%%%%%%%%%%%%%%%%%%%%%%%%%%%%%%%%%%%%%%%%%%%%%%%%%%%%%%%%%%%%%%%%

As a simple consequence of Theorem \ref{2k2p4kp} we get Corollary \ref{3omega} and Corollary \ref{2k2k2p4}, results in \cite{brause2019chromatic} and \cite{prashant2021chromatic} respectively.
\begin{corollary}\cite{brause2019chromatic}\label{3omega}
Let $G$ be a $\{2K_2, gem\}$-free graph, then $\chi(G)\leq \max\{3,\omega\}$.
\end{corollary}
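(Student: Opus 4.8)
The plan is to recognize that the $gem$ is precisely the graph $P_4+K_1$, i.e.\ $P_4+K_p$ with $p=1$, and then to invoke Theorem \ref{2k2p4kp} directly. Since the $gem$ consists of an induced $P_4$ together with a single extra vertex joined to all four of its vertices, we have $gem\cong P_4+K_1$, so the class of $\{2K_2,gem\}$-free graphs coincides with the class of $\{2K_2,P_4+K_p\}$-free graphs in the special case $p=1$. Thus the whole corollary reduces to substituting $p=1$ into the piecewise bound of Theorem \ref{2k2p4kp} and simplifying the resulting arithmetic.

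First I would split on the value of $\omega=\omega(G)$ exactly as the statement of Theorem \ref{2k2p4kp} does, using the threshold $p+1=2$. For $1\leq\omega\leq p+1=2$, the theorem gives $\chi(G)\leq\omega+\frac{p(p+1)}{2}=\omega+1\leq 3$, and since $\max\{3,\omega\}=3$ throughout this range, we obtain $\chi(G)\leq\max\{3,\omega\}$. For $\omega\geq p+2=3$, the theorem gives $\chi(G)\leq\omega+\frac{p(p+1)}{2}-1=\omega+1-1=\omega$, and since $\max\{3,\omega\}=\omega$ in this range, we again obtain $\chi(G)\leq\max\{3,\omega\}$. Combining the two ranges yields $\chi(G)\leq\max\{3,\omega\}$ for every $\{2K_2,gem\}$-free graph $G$, which is the claim.

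There is essentially no obstacle here beyond the bookkeeping of the case analysis: the only points requiring care are the identification $gem\cong P_4+K_1$ and the evaluation $\frac{p(p+1)}{2}=1$ at $p=1$, which makes the additive surplus over $\omega$ equal to $1$ for small cliques and exactly $0$ once $\omega\geq 3$. No new structural argument is needed, since all of the colouring work has already been carried out in Theorem \ref{bhp4kpf} and transferred to the $2K_2$-free setting through Corollary \ref{2k2} in the proof of Theorem \ref{2k2p4kp}.
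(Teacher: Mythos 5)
Your proposal is correct and is exactly the route the paper takes: the paper derives Corollary \ref{3omega} as an immediate consequence of Theorem \ref{2k2p4kp} by specializing to $p=1$ via the identification $gem\cong P_4+K_1$, and your case split at $\omega=2$ versus $\omega\geq 3$ with the arithmetic $\frac{p(p+1)}{2}=1$ matches what is intended. Nothing further is needed.
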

%%%%%%%%%%%%%%%%%%%%%%%%%%%%%%%%%%%%%%%%%%%%%%%%%%%%%%%%%%%%%%%%%%%%%%%%%%%%%%%%%%%%%%%%%%%%%%%%%
\begin{corollary}\cite{prashant2021chromatic}\label{2k2k2p4}
If $G$ is a $\{2K_2, K_2+P_4\}$-free graph with $\omega(G)\geq 4$, then $\chi(G)\leq \omega(G)+2$. 
\end{corollary}

%%%%%%%%%%%%%%%%%%%%%%%%%%%%%%%%%%%%%%%%%%%%%%%%%%%%%%%%%%%%%%%%%%%%%%%%%%%%%%%%%%%%%%%%%%%%%%%%%%%%%%%%%%%%%%%%%%%%%%%
\subsection{$\{butterfly,hammer,(K_1\cup K_2)+K_p\}$-free graphs}\label{k1k2+kp}
In \cite{brause2019chromatic}, C. Brause et al. showed that $\{2K_2,(K_1\cup K_2)+K_p\}$-free graphs are multisplit graphs when $\omega(G)\geq 2p$, $p\geq 2$ and they admit $\omega+(2p-1)(p-1)$ as a $\chi$-binding function when $p\geq 2$.
Furthermore, they realized that this might not be the best possible bound and claimed that the problem of finding the optimal $\chi$-binding function is still open.
We shall show in subsection \ref{k1k2+kp} that $\{butterfly,hammer,(K_1\cup K_2)+K_p\}$-free graphs admits the same structure as $\{2K_2,(K_1\cup K_2)+K_p\}$-free graphs and get a better $\chi$-binding function than the existing $\chi$-binding function for $\{2K_2,H\}$-free graphs.

Let us begin by recalling some of the results in \cite{olariu1988paw,prashant2022chi,caldam2022chromaticspringerversion}.

\begin{theorem}\cite{olariu1988paw}\label{pawfree}
Let $G$ be a connected graph. Then $G$ is $paw$-free if and only if $G$ is either $K_3$-free or complete multipartite.
\end{theorem}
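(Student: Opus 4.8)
The plan is to prove both directions, with the reverse (``if'') direction being routine and the forward (``only if'') direction carrying the real content. For the reverse direction I would note that a $paw$ contains a triangle, so any $K_3$-free graph is trivially $paw$-free; and a complete multipartite graph is $paw$-free because its complement is a disjoint union of cliques and hence induced-$P_3$-free, whereas the complement of a $paw$ is $P_3\cup K_1$, which contains an induced $P_3$. Equivalently, the pendant vertex of a $paw$ together with the two triangle-vertices it misses induces a $K_1\cup K_2$, and complete multipartite graphs are precisely the $(K_1\cup K_2)$-free graphs, so this direction is immediate.

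For the forward direction, assume $G$ is connected and $paw$-free. If $G$ is $K_3$-free we are done, so suppose $G$ contains a triangle; I will show $G$ is complete multipartite, i.e. $(K_1\cup K_2)$-free. The first building block is the local observation that any vertex adjacent to a triangle must be adjacent to at least two of its three vertices, since otherwise it would be the pendant of a $paw$. I would then upgrade this to the global statement that \emph{every} vertex of $G$ lies in a triangle, by induction on the distance from a vertex to a fixed triangle $T$: the base cases $d=0,1$ follow from the local observation, and in the inductive step a neighbor closer to $T$ already lies in a triangle $T'$ by induction, so the local observation applied to $T'$ places the vertex in a triangle as well. A short consequence is that every edge of $G$ lies in a triangle.

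The crux is then to show that every edge $xy$ is dominating, i.e. no vertex $z$ is non-adjacent to both $x$ and $y$; this is exactly $(K_1\cup K_2)$-freeness. Suppose such a $z$ exists and let $\{x,y,w\}$ be a triangle through $xy$. If $z\sim w$ then $\langle\{z,w,x,y\}\rangle\cong paw$; if $z\not\sim w$ then $z$ misses all of $\{x,y,w\}$, and here I would invoke connectivity: along a shortest path from $z$ to this triangle, the first vertex adjacent to the triangle is adjacent to at least two of its vertices by the local observation, while its predecessor on the path, being non-adjacent to the whole triangle, is adjacent only to it, again producing an induced $paw$. Either case contradicts $paw$-freeness, so every edge is dominating and $G$ is $(K_1\cup K_2)$-free, hence complete multipartite.

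I expect the main obstacle to be this last step, specifically the subcase where $z$ is isolated from the triangle on $xy$: there is no immediate $paw$, and one must use connectivity and choose the path and the four vertices carefully so that they induce \emph{exactly} a $paw$ — all distinct, with the predecessor missing both triangle-vertices that complete the triangle with the first adjacent vertex. The earlier reductions (every vertex, and then every edge, lies in a triangle) are what make this final argument clean, so I would set them up first.
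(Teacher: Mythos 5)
The paper does not prove this statement at all --- it is imported verbatim from Olariu's 1988 note as a black box (Theorem \ref{pawfree} carries only the citation), so there is no in-paper argument to compare against. Judged on its own, your proof is correct and complete: the reverse direction via $(K_1\cup K_2)$-freeness of complete multipartite graphs is exactly right, the local observation (a vertex adjacent to a triangle meets it in at least two vertices) is sound, and the shortest-path argument in the final subcase does produce a genuine induced $paw$, since the predecessor of the first triangle-neighbour is non-adjacent to all three triangle vertices by minimality of the path. This is essentially Olariu's original argument, and the only simplification available is cosmetic: the shortest-path step already shows every vertex lies within distance one of the fixed triangle $\{x,y,w\}$ and hence meets it in at least two vertices, which makes the separate ``every vertex lies in a triangle'' induction dispensable.
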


\begin{proposition}\cite{prashant2022chi,caldam2022chromaticspringerversion}\label{k1k2kpprop}
Let $G$ be a $((K_1\cup K_2)+K_p)$-free graph with $\omega(G)\geq p+2$, $p\geq 1$.
Then $G$ satisfies the following.
\begin{enumerate}[(i)]
\setlength\itemsep{-1pt}
\item \label{completeIkIl} For $k,\ell\in\{1,2,\ldots,\omega(G)\}$, $[I_k,I_{\ell}]$ is complete. Thus, $\langle V_1\rangle$ is a complete multipartite graph with  $U_{k}=\{v_{k}\}\cup I_{k}$, $1\leq k\leq \omega(G)$ as its partitions.

\item \label{emptycij} For $j\geq p+2$ and $1\leq i<j$, $C_{i,j}=\emptyset$.

\item \label{p-1neighbors} For $x\in V_2$, $x$ has neighbors in  at most $(p-1)$ $U_\ell$'s where $\ell\in\{1,2,\ldots,\omega(G)\}$.

\end{enumerate}
\end{proposition}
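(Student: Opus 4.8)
The plan is to prove all three parts by the same extremal mechanism: since $\omega(G)\geq p+2$, a maximum clique $A$ contains enough vertices that, once we exhibit an induced $K_1\cup K_2$ all three of whose vertices are adjacent to $p$ mutually adjacent vertices, we obtain a forbidden induced $(K_1\cup K_2)+K_p$. Throughout I would use the partition $V(G)=V_1\cup V_2$ and Fact \ref{true}.

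For (\ref{completeIkIl}) I would argue exactly as in Lemma \ref{p4kplemma}(\ref{relationIlIr}): if some $a\in I_k$ and $b\in I_\ell$ with $k\neq\ell$ were non-adjacent, then $\langle\{a,b,v_k\}\rangle\cong K_1\cup K_2$ with $a$ isolated (since $av_k\notin E(G)$ while $bv_k\in E(G)$), and choosing $p$ clique vertices $v_{q_1},\dots,v_{q_p}$ with indices in $\{1,\dots,\omega\}\setminus\{k,\ell\}$ (possible as $\omega\geq p+2$) yields $\langle\{a,b,v_k,v_{q_1},\dots,v_{q_p}\}\rangle\cong (K_1\cup K_2)+K_p$, a contradiction; hence each $U_k$ is a part of a complete multipartite $\langle V_1\rangle$. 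For (\ref{emptycij}) I would take any $a\in C_{i,j}$ with $j\geq p+2$; by Fact \ref{true} the set $\{v_1,\dots,v_j\}\setminus\{v_i,v_j\}$ has at least $j-2\geq p$ vertices, all adjacent to $a$, mutually adjacent, and adjacent to $v_i,v_j$, while $\langle\{a,v_i,v_j\}\rangle\cong K_1\cup K_2$; choosing $p$ of them gives the forbidden graph, so $C_{i,j}=\emptyset$.

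The substantive part is (\ref{p-1neighbors}). Fix $x\in V_2$, say $x\in C_{i,j}$, and suppose for contradiction that $x$ has neighbors in at least $p$ of the parts $U_\ell$. The template is to use $x$ as the isolated $K_1$, an edge $\{v_s,v_{s'}\}$ of $A$ with $v_s,v_{s'}\notin N(x)$ as the $K_2$, and one neighbor of $x$ from each of $p$ distinct touched parts as the $K_p$; by (\ref{completeIkIl}) these representatives already form a clique, and such a representative is adjacent to $v_s$ (resp.\ $v_{s'}$) precisely when its part differs from $s$ (resp.\ $s'$). The first observation is that $x$ can touch at most $p-1$ parts other than $U_i,U_j$: otherwise the base $\{x,v_i,v_j\}$ (valid since $xv_i,xv_j\notin E(G)$ and $v_iv_j\in E(G)$) together with $p$ neighbors of $x$ in parts distinct from $i,j$ already produces $(K_1\cup K_2)+K_p$.

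The main obstacle is precisely that $x$, although non-adjacent to $v_i$ and $v_j$, may still have neighbors inside $I_i$ or $I_j$, so parts $i$ and $j$ can be touched and the count could a priori reach $p+1$. To close this gap I would argue by cases on how many of $\{i,j\}$ are touched, always exploiting that $\omega\geq p+2$ leaves spare clique vertices non-adjacent to $x$. If $x$ touches at least $p$ parts, then since touching $\geq p+2$ parts would already force $\geq p$ touched parts outside $\{i,j\}$, we may assume the number $t$ of touched parts satisfies $p\leq t\leq p+1$, so at least $\omega-t\geq 1$ parts are untouched and hence carry a clique vertex non-adjacent to $x$. Selecting the $K_2$-edge $\{v_s,v_{s'}\}$ from untouched parts when two are available (the case $t\leq\omega-2$), or else replacing one endpoint by $v_i$ or $v_j$ and choosing the $p$ representatives so as to avoid the corresponding part, one checks in each case that the chosen $K_2$-vertices are non-adjacent to $x$ yet adjacent to every $K_p$-representative, yielding the forbidden $(K_1\cup K_2)+K_p$. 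This case analysis, ensuring the $K_2$-edge can always be chosen disjoint from the parts carrying the $K_p$, is the only delicate point; the bound $p-1$ then follows.
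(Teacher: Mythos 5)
Your three arguments are all sound, but note that the paper itself gives no proof of Proposition~\ref{k1k2kpprop} to compare against: it is recalled verbatim from \cite{prashant2022chi,caldam2022chromaticspringerversion}, so your write-up is a self-contained reconstruction rather than a parallel to anything in this text. Parts (\ref{completeIkIl}) and (\ref{emptycij}) are immediate as you say, using $\{a,b,v_k\}$ (resp.\ $\{a,v_i,v_j\}$) as the $K_1\cup K_2$ and padding with clique vertices, which $\omega\geq p+2$ and Fact~\ref{true} always supply. For (\ref{p-1neighbors}) your case analysis does close: the preliminary bound shows at most $p-1$ touched parts lie outside $\{i,j\}$, hence $t\leq p+1$; when $t\leq\omega-2$ two untouched parts furnish the $K_2$-edge, and in the single remaining configuration ($t=p+1$, $\omega=p+2$, so both $U_i$ and $U_j$ touched and exactly one part $U_s$ untouched) the edge $\{v_s,v_i\}$ works because the $p$ touched parts other than $i$ automatically avoid both $s$ and $i$, and every representative, lying in $I_\ell$ or being $v_\ell$ for $\ell\notin\{s,i\}$, is adjacent to both endpoints; mutual adjacency of the representatives follows from part (\ref{completeIkIl}). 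The only cosmetic caveat is that "one checks in each case" is doing real work there and should be spelled out as above, but no step fails.
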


By using Proposition \ref{k1k2kpprop}, let us show that connected $\{butterfly, hammer, (K_1\cup K_2)+K_p\}$-free graphs with $\omega(G)\geq 2p$ are multisplit graphs.

\begin{theorem}\label{hbk1k2kp}
Let $p$ be an integer greater than $1$ and $G$ be a connected $\{butterfly,hammer,(K_1\cup K_2)+K_p\}$-free graph.
If $\omega(G)\geq 2p$, then $G$ is a multisplit graph.
\end{theorem}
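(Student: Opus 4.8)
The plan is to establish the multisplit structure by exhibiting an explicit partition of $V(G)$ into a part inducing a complete multipartite graph and a part inducing an independent set, using the vertex partition $V(G)=V_1\cup V_2$ with $V_1=\bigcup_{1\leq k\leq\omega}U_k$ and $V_2=\bigcup_{(i,j)\in L}C_{i,j}$ that runs throughout the paper. The natural candidate is to take $V_1$ as the complete-multipartite side and $V_2$ as the independent side. Since $\omega(G)\geq 2p\geq p+2$ (as $p\geq 2$), I may invoke Proposition \ref{k1k2kpprop} directly: part (\ref{completeIkIl}) already tells me that $\langle V_1\rangle$ is complete multipartite with the $U_k$ as its partite classes, so the first half of the multisplit requirement is handed to me for free.

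First I would verify that $V_2$ is an independent set. By Claim 1 of Theorem \ref{hbf}, each $C_{i,j}$ is independent, so the only danger is an edge between two distinct cells $C_{i,j}$ and $C_{m,n}$. Moreover, by part (\ref{emptycij}) of Proposition \ref{k1k2kpprop}, every nonempty cell satisfies $j\leq p+1$, so the relevant indices all lie in $\{1,2,\ldots,p+1\}$, which for $\omega\geq 2p$ leaves a comfortable supply of clique vertices $v_s$ untouched by the indices involved. The strategy for ruling out a cross-edge $ab$ with $a\in C_{i,j}$, $b\in C_{m,n}$ is the usual one: using Fact \ref{true} locate clique vertices adjacent to $a$ and to $b$, and use part (\ref{p-1neighbors}) — each vertex of $V_2$ has neighbors in at most $p-1$ of the $U_\ell$'s — together with the abundance of clique vertices to find a configuration forming a forbidden $butterfly$ or $hammer$ (or to combine with a $K_p$ and reach a $(K_1\cup K_2)+K_p$). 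The bound $\omega\geq 2p$ is exactly what guarantees enough free clique vertices: $a$ and $b$ together miss at least $\omega-2(p-1)\geq 2$ of the $U_\ell$'s, giving room to complete the forbidden subgraph.

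Once $V_2$ is shown to be independent, the partition $V_1\cup V_2$ witnesses that $G$ is multisplit, completing the proof, and it then remains only to note the disconnected case reduces componentwise. **The hard part will be** the cross-edge elimination between distinct $C_{i,j}$ and $C_{m,n}$: unlike the single-cell case, here the two endpoints may see different clique vertices, so I expect to split into subcases according to whether the index sets $\{i,j\}$ and $\{m,n\}$ overlap, and in each subcase carefully exhibit the three additional vertices (two clique vertices plus a shared neighbor, or a $K_p$) realizing the forbidden graph. The counting afforded by $\omega\geq 2p$ and part (\ref{p-1neighbors}) should make each subcase go through, but keeping track of which $v_s$ is adjacent to which endpoint — and ensuring the chosen vertices are genuinely distinct and induce exactly a $butterfly$, $hammer$, or $(K_1\cup K_2)+K_p$ — is where the real care is needed.
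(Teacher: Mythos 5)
Your proposal follows essentially the same route as the paper: take $V_1$ as the complete multipartite side via Proposition \ref{k1k2kpprop}(\ref{completeIkIl}), use Claim 1 of Theorem \ref{hbf} for independence within each $C_{i,j}$, and kill cross-edges by the count $|N_A(\{a,b\})|\leq 2p-2<\omega$ from part (\ref{p-1neighbors}). The ``hard part'' you flag is resolved in the paper quite cleanly: ordering the two cells lexicographically, the first coordinate in which $(m_1,n_1)$ and $(m_2,n_2)$ differ yields (via Fact \ref{true}) a clique vertex adjacent to $b$ but not $a$, which together with the two untouched clique vertices $v_r,v_s$ gives a $hammer$ in one stroke, with no further subcase analysis needed.
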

\begin{proof}
Let $G$ be a connected $\{butterfly,hammer,(K_1\cup K_2)+K_p\}$-free graph with $\omega(G)\geq 2p$, $p\geq 2$.
Let $V(G)= V_1\cup V_2$.
Since $\omega(G)\geq 2p\geq p+2$, by (\ref{completeIkIl}) of Proposition \ref{k1k2kpprop}, we see that $\langle V_1\rangle$ is a complete multipartite graph with partitions $U_k=\{v_k\}\cup I_k$, $1\leq k\leq \omega(G)$.
Next, let us show that $V_2$ is an independent set.
Since $p\geq 2$, $\omega(G)\neq 2$ and hence by Claim 1 of Theorem \ref{hbf}, for $(i,j)\in L$, $C_{i,j}$ is an independent set.
Therefore if $a,b\in V_2$ such that $ab$ is an edge in $G$, then $a\in C_{m_1,n_1}$ and $b\in C_{m_2,n_2}$ where $(m_1,n_1)\neq (m_2,n_2)$.
It is easy to see that by (\ref{p-1neighbors}) of Proposition \ref{k1k2kpprop}, $|N_A(\{a,b\})|\leq 2p-2$ and hence there exist at least two vertices $v_r,v_s\in A$ such that $[\{a,b\},\{v_r,v_s\}]=\emptyset$.
Without loss of generality, let us assume that $(m_1,n_1)<_L(m_2,n_2)$. 
If $m_1<m_2$, then $bv_{m_1}\in E(G)$ and $\langle\{a,b,v_{m_1},v_r,v_s\}\rangle\cong hammer$, a contradiction.
If $m_1=m_2$ and $n_1<n_2$, then $bv_{n_1}\in E(G)$ and $\langle\{a,b,v_{n_1},v_r,v_s\}\rangle\cong hammer$, a contradiction.
Hence $V_2$ is independent and thereby $G$ is a multisplit graph.
\end{proof}
%%%%%%%%%%%%%%%%%%%%%%%%%%%%%%%%%%%%%%%%%%%%%%%%%%%%%%%%%%%%%%%%%%%%%%%%%%%%%%%%%%%%%%%%%%%%%%%%%%%%%%%%%%%%%%%%%%
One can notice that when $p=0$ and $1$, $(K_1\cup K_2)+K_p\cong K_1\cup K_2$ and $(K_1\cup K_2)+K_p\cong paw$ respectively.
As $(K_1\cup K_2)\sqsubseteq P_4$, $(K_1\cup K_2)$-free graphs are perfect.
Also C. Brause et al. in \cite{brause2019chromatic} has shown that every multisplit graph is perfect. 
Hence as a consequence of Theorem \ref{pawfree} and Theorem \ref{hbk1k2kp} we get Theorem \ref{bhk1k2kpperfect}, Corollary \ref{2k2k1k2kpperfect} and Corollary \ref{correct}.
\begin{theorem}\label{bhk1k2kpperfect}
Let $p$ be a non-negative integer and $G$ be a connected $\{butterfly,hammer,(K_1\cup K_2)+K_p\}$-free graph with $\omega(G)\geq 2p$. If $\omega(G)\neq 2$, then $G$ is perfect.
\end{theorem}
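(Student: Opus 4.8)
The plan is to split the argument according to the value of $p$, since the forbidden graph $(K_1\cup K_2)+K_p$ degenerates to a familiar configuration in each regime, and then to assemble the already-available structural results. In every case the strategy is the same: identify a hereditary class that is known to be perfect and show that the hypotheses force $G$ into it.

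First I would dispose of the boundary cases $p\in\{0,1\}$. When $p=0$ the forbidden graph is $(K_1\cup K_2)+K_0=K_1\cup K_2$; since $K_1\cup K_2\sqsubseteq P_4$, a $(K_1\cup K_2)$-free graph is $P_4$-free, hence a cograph, and cographs are perfect, so $G$ is perfect irrespective of $\omega(G)$ (the side condition $\omega(G)\neq 2$ is not even needed here). When $p=1$ the forbidden graph is $(K_1\cup K_2)+K_1\cong paw$. Here the hypotheses $\omega(G)\geq 2p=2$ together with $\omega(G)\neq 2$ force $\omega(G)\geq 3$, so $G$ contains a triangle and is therefore not $K_3$-free. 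Applying Olariu's dichotomy (Theorem \ref{pawfree}) to the connected $paw$-free graph $G$, the only surviving alternative is that $G$ is complete multipartite, and complete multipartite graphs are perfect.

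For the main regime $p\geq 2$ I would simply invoke Theorem \ref{hbk1k2kp}: since $p>1$ and $\omega(G)\geq 2p$, that theorem already guarantees that the connected $\{butterfly,hammer,(K_1\cup K_2)+K_p\}$-free graph $G$ is a multisplit graph, and by the result of Brause et al.\ \cite{brause2019chromatic} every multisplit graph is perfect. Note that in this regime $\omega(G)\geq 2p\geq 4>2$, so $\omega(G)\neq 2$ holds automatically and imposes no restriction; the side condition is genuinely needed only to rule out the triangle-free branch of Olariu's theorem in the $p=1$ case.

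I do not expect a real obstacle, since the statement is essentially an assembly of earlier results. The one point requiring care is the bookkeeping on $\omega(G)$ across the three cases: I must verify that $\omega(G)\geq 2p$ combined with $\omega(G)\neq 2$ places every instance into a perfect class — in particular that $p=1$ indeed forces $\omega(G)\geq 3$ so that Theorem \ref{pawfree} yields complete multipartiteness rather than the useless $K_3$-free possibility — and that the degenerate readings $(K_1\cup K_2)+K_0\cong K_1\cup K_2$ and $(K_1\cup K_2)+K_1\cong paw$ are handled separately \emph{before} quoting Theorem \ref{hbk1k2kp}, which is stated only for $p\geq 2$.
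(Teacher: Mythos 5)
Your proposal is correct and follows essentially the same route as the paper: the paper derives this theorem exactly as "a consequence of Theorem \ref{pawfree} and Theorem \ref{hbk1k2kp}," using the observations that $(K_1\cup K_2)\sqsubseteq P_4$ handles $p=0$, Olariu's dichotomy handles $p=1$ (where $\omega(G)\geq 2$ and $\omega(G)\neq 2$ force the complete multipartite branch), and the multisplit structure plus the perfection of multisplit graphs handles $p\geq 2$. Your write-up just makes explicit the case bookkeeping that the paper leaves implicit.
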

%%%%%%%%%%%%%%%%%%%%%%%%%%%%%%%%%%%%%%%%%%%%%%%%%%%%%%%%%%%%%%%%%%%%%%%%%%%%%%%%%%%%%%%%%%%%%%%%%%%%%%%%%%%%%%%%%
\begin{corollary}\cite{brause2019chromatic}\label{2k2k1k2kpperfect}
If $G$ is a connected $\{2K_2,(K_1\cup K_2)+K_p\})$-free graph with $\omega(G)\geq 2p$, $p\geq 2$, then $G$ is a multisplit graph.
\end{corollary}
%%%%%%%%%%%%%%%%%%%%%%%%%%%%%%%%%%%%%%%%%%%%%%%%%%%%%%%%%%%%%%%%%%%%%%%%%%%%%%%%%%%%%%%%%%%%%%%%%%%%%%%%%%%%%%%%%%
\begin{corollary}\cite{brause2019chromatic}\label{correct}
Let $p\geq 0$ be an integer and $G$ be a $\{2K_2,(K_1\cup K_2)+K_p\}$-free graph with $\omega(G)\geq 2p$. If $p\neq 1$ or $\omega(G)\neq 2$, then $G$ is perfect.
\end{corollary}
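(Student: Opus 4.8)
The plan is to split on the value of $p$ and, in each case, reduce the question of perfection to a single connected graph to which one of the earlier structural results applies. Recall that a graph is perfect precisely when each of its connected components is perfect, so it suffices to treat the components one at a time; in particular every isolated-vertex component is trivially perfect and can be ignored.

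First I would dispose of $p=0$. Here $(K_1\cup K_2)+K_p\cong K_1\cup K_2$, and since $(K_1\cup K_2)\sqsubseteq P_4$, every $(K_1\cup K_2)$-free graph is $P_4$-free. As $P_4$-free graphs (cographs) are perfect, $G$ is perfect with no constraint on $\omega(G)$; thus the case $p=0,\ \omega(G)=2$ that the hypothesis still permits causes no trouble, and this case uses neither $2K_2$-freeness nor the bound $\omega(G)\geq 2p$.

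Next, for $p\geq 1$ I would use $2K_2$-freeness to observe that $G$ has at most one nontrivial component, since two disjoint edges taken from two distinct components would induce a $2K_2$. Because $\omega(G)\geq 2p\geq 2$ forces the existence of an edge, there is then exactly one nontrivial component $G'$, and any maximum clique (being connected and of size $\geq 2$) lies inside it, so $\omega(G')=\omega(G)$. Being an induced subgraph, $G'$ is connected and $\{2K_2,(K_1\cup K_2)+K_p\}$-free, hence by Theorem \ref{2k2characterization} it is also $\{butterfly,hammer\}$-free. I would then check that the hypothesis ``$p\neq 1$ or $\omega(G)\neq 2$'' forces $\omega(G')\neq 2$: if $p=1$ it is imposed directly, while if $p\geq 2$ then $\omega(G')=\omega(G)\geq 2p\geq 4$. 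Consequently Theorem \ref{bhk1k2kpperfect} applies to $G'$ and shows $G'$ is perfect, so $G$ is perfect. For the single case $p=1$ one may instead invoke Theorem \ref{pawfree}: since $(K_1\cup K_2)+K_1\cong paw$ and $\omega(G')\geq 3$, the connected $paw$-free graph $G'$ cannot be $K_3$-free and is therefore complete multipartite, hence perfect.

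The work here is bookkeeping rather than genuine difficulty, as all the structural content already resides in Theorems \ref{hbk1k2kp} and \ref{bhk1k2kpperfect}. The one point I would verify carefully is that the exceptional pair $p=1,\ \omega(G)=2$ must be excluded: in that regime $G$ is merely $\{2K_2,paw\}$-free and triangle-free, which permits $C_5$, a non-perfect graph with $\chi=3>2=\omega$. Confirming that the stated hypothesis is exactly what prevents $\omega(G')=2$ in the application of Theorem \ref{bhk1k2kpperfect} is the crux of aligning the statement with the machinery.
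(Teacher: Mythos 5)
Your proof is correct and follows essentially the same route as the paper, which derives this corollary from Theorem \ref{pawfree} and Theorems \ref{hbk1k2kp}/\ref{bhk1k2kpperfect} together with the remarks that $(K_1\cup K_2)$-free graphs are $P_4$-free and that multisplit graphs are perfect. The only thing you add is the explicit reduction (via $2K_2$-freeness) to a single nontrivial component so that the connectivity hypothesis of Theorem \ref{bhk1k2kpperfect} is met, a bookkeeping step the paper leaves implicit.
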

%%%%%%%%%%%%%%%%%%%%%%%%%%%%%%%%%%%%%%%%%%%%%%%%%%%%%%%%%%%%%%%%%%%%%%%%%%%%%%%%%%%%%%%%%%%%%%%%%%%%%%%%%%%%%%%%%%%%%%%%%
Finally, let us establish a $\chi$-binding function for the class of $\{butterfly,hammer,(K_1\cup K_2)+K_p\}$-free graphs. 
\begin{theorem}\label{colorbhk1k2}
Let $p$ be a non-negative integer and $G$ be a connected $\{butterfly,hammer,(K_1\cup K_2)+K_p\}$-free graph with $\omega(G)\neq 2$. Then\\
$\chi(G)\leq \left\{
\begin{array}{lcl}
\omega(G)+ \frac{p(p+1)}{2}					& \textnormal{for} & 1\leq\omega(G)\leq p+1	\\ 
\omega(G)+  \frac{p(p-1)}{2}  	& \textnormal{for} & \omega(G)\geq p+2.
\end{array}
\right.$
\end{theorem}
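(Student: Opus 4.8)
The plan is to produce an explicit proper colouring, following the colour-saving template of Theorems~\ref{bhc4kpf} and~\ref{bhp4kpf} but exploiting the stronger structure of $(K_1\cup K_2)+K_p$-free graphs supplied by Proposition~\ref{k1k2kpprop}. Fix a maximum clique $A$ and use the partition $V(G)=V_1\cup V_2$ with $V_1=\mathop\cup\limits_{k=1}^{\omega}U_k$, $U_k=\{v_k\}\cup I_k$, and $V_2=\mathop\cup\limits_{(i,j)\in L}C_{i,j}$. I would first dispose of $p=0$: then $(K_1\cup K_2)+K_p\cong K_1\cup K_2$ and $G$ is perfect by Theorem~\ref{bhk1k2kpperfect}, so $\chi(G)=\omega$, which matches both branches. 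For $p\ge 1$, colour $V_1$ by assigning colour $k$ to all of $U_k$; since each $U_k$ is independent and two vertices receive the same colour only when they share a common $U_k$, this is automatically proper and uses $\omega$ colours. Because $\omega\ne 2$, Claim~1 of Theorem~\ref{hbf} makes every $C_{i,j}$ independent, so the task reduces to colouring the independent sets $C_{i,j}$ while recycling as many of the colours $1,\dots,\omega$ as possible.

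Case~1 ($1\le\omega\le p+1$) is routine: there are $\binom{\omega}{2}$ sets $C_{i,j}$, and giving each a fresh colour yields $\omega+\binom{\omega}{2}\le\omega+\binom{p+1}{2}=\omega+\tfrac{p(p+1)}{2}$ colours.

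The substance is Case~2 ($\omega\ge p+2$), where I would invoke Proposition~\ref{k1k2kpprop}. By part~(\ref{emptycij}) the only nonempty sets are the $C_{i,j}$ with $2\le j\le p+1$, of which there are $\binom{p+1}{2}=\tfrac{p(p+1)}{2}$; to reach the bound $\omega+\tfrac{p(p-1)}{2}$ I must recycle exactly $p$ old colours. The key observation is that the $p$ sets $C_{1,p+1},\dots,C_{p,p+1}$ (those with $j=p+1$) are the right ones to recolour: for $x\in C_{i,p+1}$, Fact~\ref{true} forces $x$ to be adjacent to $v_1,\dots,v_p$ except $v_i$, so $x$ already has neighbours in the $p-1$ cliques $U_\ell$ with $\ell\in\{1,\dots,p\}\setminus\{i\}$; but part~(\ref{p-1neighbors}) caps the number of such cliques at $p-1$, so this list is exact and in particular $[x,U_i]=\emptyset$. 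Hence $[C_{i,p+1},U_i]=\emptyset$, the set $U_i\cup C_{i,p+1}$ is independent, and I may colour $C_{i,p+1}$ with the old colour $i$; the $p$ recoloured sets use pairwise distinct colours, so they cannot clash with one another. Colouring each of the remaining $\tfrac{p(p+1)}{2}-p=\tfrac{p(p-1)}{2}$ sets $C_{i,j}$ ($2\le j\le p$) with a new colour produces a proper colouring with $\omega+\tfrac{p(p-1)}{2}$ colours.

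I expect the main obstacle to be precisely the colour-recycling step, namely deducing the exact equality $[C_{i,p+1},U_i]=\emptyset$ from the \emph{tightness} between the lower bound on neighbour-cliques coming from Fact~\ref{true} and the upper bound of Proposition~\ref{k1k2kpprop}(\ref{p-1neighbors}); everything after that is bookkeeping, since the independence of the recycled classes $U_i\cup C_{i,p+1}$ and of the freshly coloured classes follows immediately from Claim~1 of Theorem~\ref{hbf} and the definitions of $U_k$ and $C_{i,j}$.
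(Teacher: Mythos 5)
Your proposal is correct and follows essentially the same route as the paper: colour $U_k$ with colour $k$, invoke Claim~1 of Theorem~\ref{hbf} for independence of the $C_{i,j}$, use Proposition~\ref{k1k2kpprop}(\ref{emptycij}) to kill the $C_{i,j}$ with $j\geq p+2$, and recycle colour $i$ on $C_{i,p+1}$ via $[C_{i,p+1},U_i]=\emptyset$. Your explicit derivation of that last emptiness from the tightness of Fact~\ref{true} against Proposition~\ref{k1k2kpprop}(\ref{p-1neighbors}) is exactly the justification the paper leaves implicit.
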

\begin{proof}
Let $G$ be a connected $\{butterfly,hammer, (K_1\cup K_2)+K_p\}$-free graph such that $\omega(G)\neq 2$. 
Let $\{1,2,\ldots,\omega(G)+\frac{p(p-1)}{2}\}$ be the set of colors.
Clearly the vertices in $V_1$ can be colored with $\omega$ colors by assigning color $k$ to the vertices of $U_k$ for $1\leq k\leq \omega$.
Since $(K_1\cup K_2)\sqsubseteq P_4$, for $1\leq \omega(G)\leq p+1$, the result follows from Theorem \ref{bhp4kpf}.
So we shall consider $\omega(G)\geq p+2$.
One can see that by (\ref{emptycij}) of Proposition \ref{k1k2kpprop}, $C_{i,j}=\emptyset$ for $j\geq p+2$
and by (\ref{p-1neighbors}) of Proposition \ref{k1k2kpprop}, $[C_{i,p+1}, U_{i}]=\emptyset$, for every $i\in\{1,2,\ldots p\}$.
Hence by Claim 1 of Theorem \ref{hbf}, one can assign color $i$ to the vertices of $C_{i,p+1}$, for every $i\in\{1,2,\ldots,p\}$.
The remaining $C_{i,j}$'s can be colored by assigning a new color to each $C_{i,j}$.
Clearly this is a proper coloring of $G$. 
Hence for $\omega(G)\geq p+2$, $G$ is $\left(\omega(G)+  \frac{p(p-1)}{2}\right)$-colorable. 
\end{proof}
%%%%%%%%%%%%%%%%%%%%%%%%%%%%%%%%%%%%%%%%%%%%%%%%%%%%%%%%%%%%%%%%%%%%%%%%%%%%%%%%%%%%%%%%%%%%%%%%%%%%%%%%%%%%%%%%%%

As a consequence of Corollary \ref{2k2} and Theorem \ref{colorbhk1k2}, we get Corollary \ref{2k2k1k2kpcolor} and as a result we have a $\chi$-binding function better than $\omega(G)+(2p-1)(p-1)$, the one given by C. Brause et al. in \cite{brause2019chromatic} for the class of $\{2K_2, (K_1\cup K_2)+K_p\}$-free graphs, when $p\geq 2$.
\begin{corollary}\label{2k2k1k2kpcolor}
Let $p$ be a non-negative integer and $G$ be a $\{2K_2,(K_1\cup K_2)+K_p\}$-free graph,\\ then
$\chi(G)\leq \left\{
\begin{array}{lcl}
\omega(G)+ \frac{p(p+1)}{2}					& \textnormal{for} & 1\leq\omega(G)\leq p+1	\\ 
\omega(G)+  \frac{p(p-1)}{2} -1  	& \textnormal{for} & \omega(G)\geq p+2.
\end{array}
\right.$
\end{corollary}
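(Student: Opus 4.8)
The plan is to reduce to a single connected component and then lean on Theorem \ref{colorbhk1k2} and Corollary \ref{2k2}, spending the real effort on extracting the extra unit in the bound for large $\omega$. Since $G$ is $2K_2$-free, two distinct components each carrying an edge would induce a $2K_2$; hence at most one component $G_0$ of $G$ contains an edge, every other component being an isolated vertex (this is exactly the observation recorded just before Corollary \ref{chibindrelation}). Isolated vertices can reuse colour $1$, so $\chi(G)=\chi(G_0)$ and $\omega(G)=\omega(G_0)$ (with $\chi(G)=\omega(G)=1$ when $G$ is edgeless). Being connected and $2K_2$-free, $G_0$ is $\{P_5,butterfly,hammer\}$-free by Theorem \ref{2k2characterization}, in particular $\{butterfly,hammer\}$-free; and since $(K_1\cup K_2)+K_p$-freeness is hereditary, $G_0$ is a connected $\{butterfly,hammer,(K_1\cup K_2)+K_p\}$-free graph. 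It therefore suffices to bound $\chi(G_0)$.

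For the range $1\le\omega(G_0)\le p+1$ there is nothing to improve: if $\omega(G_0)\ne 2$ the bound $\chi(G_0)\le\omega+\frac{p(p+1)}{2}$ is immediate from the first line of Theorem \ref{colorbhk1k2}, and the only remaining possibility $\omega(G_0)=2$ (which forces $p\ge 1$ in order to lie in this range) is handled by Corollary \ref{2k2}, giving $\chi(G_0)\le\binom{3}{2}=3\le 2+\frac{p(p+1)}{2}$ for $p\ge1$. This settles the first case with no loss.

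The substance is the range $\omega(G_0)\ge p+2$, where Theorem \ref{colorbhk1k2} already yields $\chi(G_0)\le\omega+\frac{p(p-1)}{2}$ and the task is to save one further colour by exploiting that $G_0$ is genuinely $2K_2$-free (equivalently, also $P_5$-free), not merely $\{butterfly,hammer\}$-free. The plan is to reopen the explicit colouring behind Theorem \ref{colorbhk1k2}: $\langle V_1\rangle$ is complete multipartite and coloured by $U_1,\dots,U_\omega$ (Proposition \ref{k1k2kpprop}(\ref{completeIkIl})); all $C_{i,j}$ with $j\ge p+2$ are empty (Proposition \ref{k1k2kpprop}(\ref{emptycij})); each $C_{i,p+1}$ reuses colour $i$ because $[C_{i,p+1},U_i]=\emptyset$ (Proposition \ref{k1k2kpprop}(\ref{p-1neighbors})); and the remaining independent sets $C_{i,j}$ with $2\le j\le p$ each receive a fresh colour, there being $\sum_{j=2}^{p}(j-1)=\frac{p(p-1)}{2}$ of them. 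I would then show that, under full $2K_2$-freeness, one of these fresh classes, most naturally $C_{1,2}$, has no edges to a previously used class and may be absorbed into it, eliminating one colour and giving $\chi(G_0)\le\omega+\frac{p(p-1)}{2}-1$. A natural starting point is that for $a\in C_{1,2}$ one has $|N_A(a)|\le p-1$, since otherwise $\{v_1,v_2,a\}$ together with $p$ common clique-neighbours of $a$ would induce $(K_1\cup K_2)+K_p$; I would combine this with $P_5$-freeness to locate a single existing colour class with no edge to $C_{1,2}$. Verifying that such a merge is always available, that is, producing via $2K_2$-/$P_5$-freeness a colour class disjoint in neighbourhood from the chosen $C_{i,j}$, is the crux and the step I expect to be hardest. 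The genuine improvement is present only for $p\ge 2$; for $p\le 1$ the graphs in this range are perfect (Theorem \ref{bhk1k2kpperfect} / Corollary \ref{correct}), so that $\chi(G_0)=\omega(G_0)$ and the colour-saving mechanism is not the operative content.
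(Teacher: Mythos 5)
Your reduction to a single non-trivial component and your treatment of the range $1\le\omega\le p+1$ are sound and are essentially what the paper (implicitly) does. The problem is the range $\omega\ge p+2$. You correctly notice that Theorem \ref{colorbhk1k2} only delivers $\omega+\frac{p(p-1)}{2}$ and that the extra ``$-1$'' must be earned separately, but your proposal then only sketches a plan (absorb one of the fresh classes, most naturally $C_{1,2}$, into an existing colour class using $2K_2$-/$P_5$-freeness) and you explicitly concede that verifying the merge is the step you have not done. That is a genuine gap, not a routine verification: the paper itself gives no argument for the $-1$ either --- it simply asserts the corollary ``as a consequence'' of Corollary \ref{2k2} and Theorem \ref{colorbhk1k2}, neither of which yields it --- so there is no source to fall back on, and as written your argument proves only $\chi(G)\le\omega(G)+\frac{p(p-1)}{2}$ in the second range.

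Worse, the merge you hope to find provably does not always exist, and your own closing sentence contains the obstruction without drawing the conclusion: for $p\le 1$ and $\omega\ge p+2$ the graphs in question are perfect, hence $\chi=\omega$, while the claimed bound is $\omega+\frac{p(p-1)}{2}-1=\omega-1<\chi$. Concretely, $K_3$ is $\{2K_2,paw\}$-free with $\omega=3\ge p+2$ for $p=1$ and $\chi=3>2$; $K_2$ (or $C_4$) plays the same role for $p=0$. So the statement as printed is false for $p\in\{0,1\}$ in the second range; it can only be salvaged by restricting to $p\ge 2$ (the only regime the authors invoke when comparing with Brause et al.) or by dropping the $-1$. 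Even for $p\ge 2$ the colour-saving step still has to be supplied: the bound $|N_A(a)|\le p-1$ for $a\in C_{1,2}$ is correct, but different vertices of $C_{1,2}$ may miss different parts of $A$, so producing one fixed colour class with no edge to all of $C_{1,2}$ does not follow from that observation alone.
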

%%%%%%%%%%%%%%%%%%%%%%%%%%%%%%%%%%%%%%%%%%%%%%%%%%%%%%%%%%%%%%%%%%%%%%%%%%%%%%%%%
%%%%%%%%%%%%%%%%%%%%%%%%%%%%%%%%%%%%%%%%%%%%%%%%%%%%%%%%%%%%%%%%%%%%%%%%%%%%%%%%%%%%%%%%%%%%%%%
\bibliographystyle{ams}
\bibliography{ref}

\ifx\undefined\bysame
\newcommand{\bysame}{\leavevmode\hbox to3em{\hrulefill}\,}
\fi
\begin{thebibliography}{10}

\bibitem{bharathi2018colouring}
A.~P. Bharathi and S.~A. Choudum, {\em Colouring of {($P_3\cup P_2$)}-free
  graphs}, Graphs and Combinatorics {\bf 34} (2018), no.~1, 97--107.

\bibitem{blazsik1993graphs}
Z.~Bl{\'a}zsik, M.~Hujter, A.~Pluh{\'a}r, and Z.~Tuza, {\em Graphs with no
  induced {$C_4$ and $2K_2$}}, Discrete Mathematics {\bf 115} (1993), no.~1-3,
  51--55.

\bibitem{brause2019chromatic}
C.~Brause, B.~Randerath, I.~Schiermeyer, and E.~Vumar, {\em On the chromatic
  number of {$2K_2$}-free graphs}, Discrete Applied Mathematics {\bf 253}
  (2019), 14--24.

\bibitem{chung1990maximum}
F.~R. Chung, A.~Gy{\'a}rf{\'a}s, Z.~Tuza, and W.~T. Trotter, {\em The maximum
  number of edges in {$2K_2$}-free graphs of bounded degree}, Discrete
  Mathematics {\bf 81} (1990), no.~2, 129--135.

\bibitem{dhanalakshmi20162k2}
S.~Dhanalakshmi, N.~Sadagopan, and V.~Manogna, {\em On {$2K_2$}-free
  graphs-structural and combinatorial view}, arXiv preprint arXiv:1602.03802
  (2016), 1--15.

\bibitem{dong2022chromatic}
W.~Dong, B.~Xu, and Y.~Xu, {\em On the chromatic number of some {$ P_5 $}-free
  graphs}, arXiv preprint arXiv:2202.13177 (2022), 1--18.

\bibitem{el1985existence}
M.~El-Zahar and P.~Erd{\H{o}}s, {\em On the existence of two non-neighboring
  subgraphs in a graph}, Combinatorica {\bf 5} (1985), no.~4, 295--300.

\bibitem{erdos1985problems}
P.~Erd\H{o}s, {\em Problems and results on chromatic numbers in finite and
  infinite graphs}, Graph Theory with Applications to Algorithms and Computer
  Science (Kalamazoo, MI, 1984), Wiley-Intersci. Publ (1985), 201--213.

\bibitem{erdos1959graph}
P.~Erd\H{o}s, {\em Graph theory and probability}, Canadian Journal of
  Mathematics {\bf 11} (1959), 34--38.

\bibitem{gaspers20192p_2}
S.~Gaspers and S.~Huang, {\em {($2P_2,K_4$)}-free graphs are $4$-colorable},
  SIAM Journal on Discrete Mathematics {\bf 33} (2019), no.~2, 1095--1120.

\bibitem{gyarfas1987problems}
A.~Gy{\'a}rf{\'a}s, {\em Problems from the world surrounding perfect graphs},
  Zastosowania Matematyki Applicationes Mathematicae {\bf 19} (1987), no.~3-4,
  413--441.

\bibitem{karthick2018chromatic}
T.~Karthick and S.~Mishra, {\em Chromatic bounds for some classes of
  {$2K_2$}-free graphs}, Discrete Mathematics {\bf 341} (2018), no.~11,
  3079--3088.

\bibitem{olariu1988paw}
S.~Olariu, {\em Paw-free graphs}, Information Processing Letters {\bf 28}
  (1988), no.~1, 53--54.

\bibitem{prashant2022chi}
A.~Prashant, P.~Francis, and S.~Francis~Raj, {\em {$\chi$}-binding functions
  for some classes of {($P_3\cup P_2$)}-free graphs}, arXiv preprint
  arXiv:2203.06423 (2022), 1--18.

\bibitem{caldam2022chromaticspringerversion}
A.~Prashant, P.~Francis, and S.~F. Raj, {\em Chromatic bounds for some
  subclasses of {$(P_3\cup P_2$)}-free graphs}, Algorithms and Discrete Applied
  Mathematics, in: Lecture Notes in Computer Science, vol. 13179, Springer,
  2022, pp.~15--21.

\bibitem{prashant2021chromatic}
A.~Prashant, S.~Francis~Raj, and M.~Gokulnath, {\em Chromatic bounds for the
  subclasses of {$ pK_2 $}-free graphs}, arXiv preprint arXiv:2102.13458
  (2021), 1--16.

\bibitem{caldam2021chromaticspringerversion}
A.~Prashant and M.~Gokulnath, {\em Chromatic bounds for the subclasses of {$
  pK_2$}-free graphs}, in: Algorithms and Discrete Applied Mathematics, in:
  Lecture Notes in Computer Science, vol. 12601, Springer, 2021, pp.~288--293.

\bibitem{randerath2004vertex}
B.~Randerath and I.~Schiermeyer, {\em Vertex colouring and forbidden
  subgraphs--{A} survey}, Graphs and Combinatorics {\bf 20} (2004), no.~1,
  1--40.

\bibitem{schiermeyer2017chromatic}
I.~Schiermeyer, {\em On the chromatic number of {($P_5$, windmill)}-free
  graphs}, Opuscula Mathematica {\bf 37} (2017), no.~4, 609--615.

\bibitem{schiermeyer2019polynomial}
I.~Schiermeyer and B.~Randerath, {\em Polynomial $\chi$-binding functions and
  forbidden induced subgraphs: A survey}, Graphs and Combinatorics {\bf 35}
  (2019), no.~1, 1--31.

\bibitem{wagon1980bound}
S.~Wagon, {\em A bound on the chromatic number of graphs without certain
  induced subgraphs}, Journal of Combinatorial Theory, Series B {\bf 29}
  (1980), no.~3, 345--346.

\bibitem{west2005introduction}
D.~B. West, {\em Introduction to graph theory}, Prentice-Hall of India Private
  Limited, 2005.

\end{thebibliography}
%%%%%%%%%%%%%%%%%%%%%%%%%%%%%%%%%%%%%%%%%%%%%%%%%%%%%%%%%%%%%%%%%%%%%%%%%%%%%%%%%%%%%%%%%%%%%%%%
\end{titlepage}
\listoftodos
\end{document}